\documentclass[a4paper,12pt]{article}

\usepackage{amsthm}
\usepackage{amsmath}
\usepackage{epsfig}
\usepackage{amssymb}
\usepackage{latexsym}
\usepackage{longtable}
\usepackage[T1]{fontenc}
\usepackage[utf8]{inputenc}
\usepackage{color}
\usepackage{enumitem}

\lineskip = 1.2\baselineskip
\textwidth=16cm \oddsidemargin=0in \evensidemargin=-0.5in
\topmargin=-0.8in \textheight=10in

\author{
Robert Lukoťka\\Comenius University, Bratislava\\{\small\tt lukotka\@@dcs.fmph.uniba.sk}
}
\title{Short cycle covers of cubic graphs and intersecting $5$-circuits}

\theoremstyle{definition}
\newtheorem{definition}{Definition}
\newtheorem{theorem}[definition]{Theorem}

\newtheorem{lemma}[definition]{Lemma}
\newtheorem{conjecture}{Conjecture}[section]

\setlength\arraycolsep{2pt}

\newcommand{\C}{{\cal C}}

\newcommand{\Z}{\mathbb{Z}}

\newcommand{\cR}{{\text{R}}}
\newcommand{\cG}{{\text{G}}}
\newcommand{\cB}{{\text{B}}}

\newcommand\myto[1]{$\stackrel{\text{#1}}{\to}$}

\let\epsilon=\varepsilon

\let\phi = \varphi

\begin{document}

\maketitle


\abstract{  
A \emph{cycle cover} of a graph is a collection of cycles such that each edge of the graph is contained in at least one of the cycles. The length of a cycle cover is the sum of all cycle lengths in the cover.
We prove that every bridgeless cubic graph with $m$ edges has a cycle cover of length at most $212/135 \cdot m \ (\approx 1.570 m)$. Moreover, if the graph is cyclically $4$-edge-connected we obtain a cover of length at most
$47/30 \cdot m \approx 1.567 m$.
}


\section{Introduction}

A \emph{cycle} is a graph each vertex of which has an even degree. A \emph{circuit} is a $2$-regular connected graph. A \emph{cycle cover} of a graph $G$ is a multiset of cycles from $G$ such that each edge of $G$ is contained in at least one of the cycles. As every cycle is decomposable into circuits, a circuit cover can be obtained in a straightforward manner from a cycle cover by decomposing the cycles into circuits. The \emph{length of a cycle} is the number of its edges, the \emph{length of a cycle cover} is the sum of the lengths of all cycles in the cover. If $G$ has a bridge (a \emph{bridge} is an edge whose removal increases the number of components of the graph), then the bridge does not belong to any cycle of $G$. Thus only bridgeless graphs are of interest regarding cycle covers. In this paper we are interested in finding cycle covers with small length.

Alon and Tarsi \cite{AT}, and independently Jaeger \cite{raspaud} conjectured the following.
\begin{conjecture}[Short Cycle Cover Conjecture]
\label{sccc}
Every bridgeless graph with $m$ edges has a cycle cover of length at most $1.4m$.
\end{conjecture}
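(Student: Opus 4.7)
My strategy is to reduce to the cubic case and then exploit the matching/$2$-factor decomposition. First I would observe that vertices of degree $2$ can be suppressed and vertices of degree $d\geq 4$ can be replaced by a bridgeless cubic gadget without worsening the ratio of cover length to number of edges. So it suffices to prove the statement for bridgeless cubic graphs, where $m=3n/2$ and the target bound becomes $1.4m=21n/10$.

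For a bridgeless cubic graph $G$, Petersen's theorem supplies a perfect matching $M$, and its complement $F=G-M$ is a $2$-factor $C_1\cup\cdots\cup C_k$. These circuits form a cycle cover of $F$ of length $2m/3$, so the remaining task is to build cycles covering the $m/3$ edges of $M$ with total extra length at most $1.4m-2m/3=11m/15$. The natural construction is to pair matching edges, splice each pair through a short segment of $F$, and take the resulting circuits; I would try to choose $M$ (exploiting the wealth of perfect matchings available in a $3$-edge-connected cubic graph) so as to minimise the combined length of these splicing arcs.

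A more powerful alternative goes through a cycle double cover: if $G$ admits an \emph{oriented} cycle double cover $\{D_1,\dots,D_t\}$ with $\sum|D_i|=2m$, then a weighted selection argument in the style of Jamshy--Tarsi, driven by a nowhere-zero $5$-flow, extracts a sub-multiset covering every edge with total length at most $7m/5$. Conditional on the $5$-flow conjecture (or the orientable cycle double cover conjecture) this already yields $1.4m$, so the real work is either to establish one of these conjectures or to devise a substitute. Structurally, I would decompose $G$ along its cyclic $2$- and $3$-edge-cuts and treat the cyclically $4$-edge-connected pieces separately, isolating snarks for a dedicated combinatorial analysis, probably via circuits of length $5$ as the present paper's title hints.

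The main obstacle is that $1.4$ is the exact Petersen ratio: the Petersen graph has a cycle cover of length $21=1.4\cdot 15$ and no shorter one, so every averaging argument becomes tight on it and snarks inherit its rigidity. Any proof must therefore lose essentially no slack on snark-like substructures, which is precisely why the conjecture has resisted attack for more than three decades and why even strong results such as the $212/135$ bound of the present paper stop noticeably short of $1.4m$.
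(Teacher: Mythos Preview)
The statement you are addressing is a \emph{conjecture}, not a theorem: the paper does not prove it and presents it only as motivation for the weaker bounds actually established (the $212/135\cdot m$ and $47/30\cdot m$ results). There is therefore no proof in the paper to compare your proposal against.

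Your proposal is likewise not a proof, and your final paragraph effectively concedes this. In the first approach (Petersen $2$-factor plus splicing the matching edges), you give no argument for why the splicing arcs can be held to total length at most $11m/15$; that inequality is the entire difficulty, and ``try to choose $M$ so as to minimise'' is not a mechanism. In the second approach you explicitly condition on the $5$-flow conjecture or the orientable cycle double cover conjecture, both open. Moreover, the Jamshy--Raspaud--Tarsi bound obtained from a nowhere-zero $5$-flow is $8m/5$, not $7m/5$, so even granting the $5$-flow conjecture your claimed extraction of a $7m/5$ cover is unsupported. The reduction to cubic graphs at the start is also not free: splitting a high-degree vertex into a cubic gadget adds edges, and one must check that a short cover of the expanded graph pulls back to a cover of the original with the same ratio, which is not automatic.

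What you have written is a fair sketch of why the Short Cycle Cover Conjecture is hard and of the known conditional implications around it, but it is a research outline, not a proof attempt.
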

A shortest cycle cover of the Petersen graph has length $21$ and consists of four cycles. 
The upper bound given by Conjecture \ref{sccc} is,  therefore, tight and the constant $1.4$ cannot be improved.
The Short Cycle Cover Conjecture is surprisingly strong as, among others, it implies \cite{JT}
the famous Cycle Double Cover Conjecture by Szekeres \cite{szekeres} and Seymour \cite{seymour}.

The best general result on short cycle covers is due to Alon and Tarsi \cite{AT} and Bermond, Jackson, and Jaeger \cite{BJJ} who independently proved that every bridgeless graph with $m$ edges has a cycle cover of length at most $5/3 \cdot m$. Despite numerous results on short cycle covers for special classes of graphs (see e.g. Chapter 8 of the book \cite{CQbook}) no progress was made for the general case. 
Significant attention has been devoted to cubic graphs and graphs with minimal degree three. 

Let $G$ be a bridgeless graph on $m$ edges.
Among the recent results,
 Kaiser et.~al.~\cite{kaiser} proved that $G$ has a cycle cover of length at most
$34/21 \cdot m\approx 1.619m$ for $G$ cubic and 
$44/27 \cdot m \approx 1.630m$ for $G$ loopless without vertices of degree two.
Fan \cite{fan} proved that $G$ has a cycle cover of length at most
$29/18 \cdot m \approx 1.611m$ for $G$ cubic,
$218/135 \cdot m\approx 1.611m$ for $G$ loopless without vertices of degree two, and
$278/171 \cdot m \approx 1.626m$ for $G$ without vertices of degree two.
Candr\'akov\'a and Luko\v tka \cite{sccc} proved 
the bound $1.6m$ for $G$ cubic,
$212/135 \cdot m \approx 1.570 m$ for $G$ cubic without intersecting $5$-circuits, and 
$14/9 \cdot m \approx 1.556 m$ for $G$ cubic without $5$-circuits.

In this paper we show how to overcome the problems in the proof from \cite{sccc} 
arising from $5$-circuits that intersect and obtain the following result.
\begin{theorem}\label{thm:main}
Let $G$ be a bridgeless cubic graph on $m$ edges. Then $G$ has a cycle cover consisting of three cycles of length at most $212/135 \cdot m \approx 1.570 m$.
\end{theorem}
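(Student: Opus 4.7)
The plan is to start from the proof strategy of Candr\'akov\'a and Luko\v tka \cite{sccc}, which already yields the bound $212/135 \cdot m$ under the assumption that no two $5$-circuits of $G$ share a vertex. I would trace their construction and pinpoint the precise steps at which the presence of intersecting $5$-circuits breaks the argument, then patch the construction locally at those steps while preserving the global length accounting.

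First I would reduce to the cyclically $4$-edge-connected case. A $2$-edge-cut splits $G$ into two bridgeless cubic (multi)graphs on strictly fewer edges, and a nontrivial $3$-edge-cut decomposes $G$ analogously; cycle covers of the pieces can be recombined into a cover of $G$ with the same length ratio, so by induction it suffices to handle the cyclically $4$-edge-connected case. This reduction also restricts how pairs of $5$-circuits can interact, which prunes the list of local configurations that must later be analysed.

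Next I would enumerate, up to symmetry, all the ways two $5$-circuits can intersect in such a graph: sharing a single vertex, a single edge, a path of length two or three, or four consecutive edges. For each configuration I would introduce a local replacement---either a smaller bridgeless cubic graph obtained by contracting or rerouting inside the configuration, or an explicit local choice of three partial cycles that, together with the global construction of \cite{sccc} applied to the rest of the graph, produces a cover of $G$ consisting of three cycles of the required total length. A key bookkeeping observation is that each intersecting pair contributes at most a constant amount of excess length, and the density of such pairs must be bounded so that the total excess stays below $(212/135 - 1)m = 77m/135$.

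The main obstacle is this last step: the local replacement must simultaneously preserve cubicity and bridgelessness, avoid introducing fresh intersecting $5$-circuits that would spiral into an unbounded reduction, and fit within the tight per-edge length budget. Balancing these constraints against the requirement that the final cover consist of exactly three cycles---so that local choices must be globally consistent in a colour in $\{1,2,3\}$ labelling which of the three cycles each edge belongs to---is the central technical difficulty. Depending on how intersecting $5$-circuits cluster, some configurations may have to be treated as chains of mutually intersecting circuits rather than one pair at a time, which adds a further layer of case analysis.
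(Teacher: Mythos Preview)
Your plan heads in the opposite direction from the paper. You want to \emph{remove} intersecting $5$-circuits by local replacement and then invoke \cite{sccc}; the paper keeps them and shows that they are actually the \emph{good} case. The key observation is structural: if a $5$-circuit $C$ in the chosen $2$-factor $F$ intersects another $5$-circuit of $G$, then there is a vertex outside $C$ with two neighbours on $C$. This extra adjacency makes it possible to modify a $(\mathbb{Z}_2\times\mathbb{Z}_2)$-chain so that $C$ receives a nowhere-zero extension (Lemma~\ref{l5u}); an ordering argument handles most such $C$, and the residual ones pair up inside copies of a fixed ten-vertex subgraph treated by a separate chain lemma (Lemma~\ref{l10}). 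In the resulting flow every intersecting $5$-circuit carries at most one zero edge, so in the discharging analysis it costs the same $7/3$ per vertex as longer circuits. Only the \emph{non}-intersecting $5$-circuits pay extra, and a perfect-matching-polytope argument selects $F$ so that at most one sixth of them land in $F$; that is precisely what produces $212/135$.

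The obstacles you flag for your own approach are real and, I believe, fatal as stated. Splitting along $2$- and $3$-edge-cuts can manufacture new $5$-circuits in the pieces, so the reduction does not reach the hypothesis of \cite{sccc}. Any local gadget replacing an intersecting pair must remain cubic and bridgeless, must not create fresh intersecting $5$-circuits (else the reduction fails to terminate), and must cost nothing against the tight length budget; since intersecting $5$-circuits can form arbitrarily long chains, a one-pair-at-a-time replacement cannot suffice. No gadget meeting all of these constraints is apparent, and the paper sidesteps the issue entirely by never altering $G$: its contribution is a refinement of the flow construction and the associated cover accounting, not a graph reduction.
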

\noindent This result makes the biggest improvement of the bound on the length of shortest cycle cover of cubic graphs made by a single paper up to date (see e.g. \cite{sccc} for previous best bounds) and the proof is simpler than in \cite{sccc}.
Besides the idea concerning intersecting $5$-circuits, 
the structure of covers follows that of \cite{kaiser} while we use several optimisations 
presented in \cite{sccc}.  The ideas of Fan \cite{fan} make many steps in our proof easier, which was crucial for us to examine the intersections of $5$-circuits in the necessary depth.

As a byproduct we obtain a stronger result for cubic graphs that are 
cyclically $4$-edge-connected.
\begin{theorem}\label{thm:main2}
Let $G$ be a cyclically $4$-edge-connected cubic graph on $m$ edges. 
Then $G$ has a cycle cover consisting of three cycles of length at most $47/30 \cdot m \approx 1.567 m$.
\end{theorem}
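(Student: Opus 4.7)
The plan is to follow the general strategy of Theorem \ref{thm:main} and to isolate the place in its proof where cyclic $4$-edge-connectivity permits a tighter estimate, so as to save the $m/270 = (212/135 - 47/30)\cdot m$ that separates the two bounds. Since Theorem \ref{thm:main} already yields three cycles of length at most $212/135 \cdot m$ unconditionally, the goal is to show that in the cyclically $4$-edge-connected case one of the terms in the length estimate is sharper.

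First I would choose a perfect matching $M$ of $G$ that minimises a weighted count of short odd circuits of the complementary $2$-factor $F = G-M$; this is the same choice used for the main theorem. From $M$ and $F$ I would build three cycles $C_1, C_2, C_3$ by the Alon--Tarsi/Fan type construction alluded to in the introduction: the union $C_1 \cup C_2$ covers every edge of $F$ (each one lying in exactly one of the two cycles on long even circuits, and in both on suitable short or odd pieces), while $C_3$ consists of $M$ together with parity-correcting paths inside $F$. The total length of such a cover is then $m$ plus a defect term supported on short circuits of $F$, and it is precisely this defect that needs to be bounded.

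The excess over the ideal $4m/3$ that appears in Theorem \ref{thm:main} comes principally from configurations in which two $5$-circuits of $F$ intersect through edges of $M$, forcing the construction to spend extra length to preserve parity. The key point for Theorem \ref{thm:main2} is that every such intersecting configuration produces a small edge-cut of $G$ (of size $2$ or $3$) that separates two nontrivial circuits: the two $5$-circuits, glued along a few matching edges, leave only a few edges crossing the rest of the graph. Cyclic $4$-edge-connectivity forbids exactly these cuts, so the problematic configurations simply cannot occur, and the relevant term in the weighted estimate vanishes, letting the ratio drop from $212/135$ to $47/30$.

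The main obstacle, once the structural observation is in place, will be to rerun the counting of Theorem \ref{thm:main} without the offending term and to check that the resulting LP-type balance indeed settles at $47/30$ rather than some intermediate fraction. One must also verify that no new bad structure appears when one tries to exploit the additional freedom in choosing $M$, and that the three-cycle construction remains globally consistent after local modifications on remaining short circuits of $F$; these are the same kind of bookkeeping steps as in \cite{sccc} and \cite{fan}, but their fine-tuning under the stronger connectivity hypothesis is the delicate part of the argument.
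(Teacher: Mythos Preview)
Your proposal rests on a misreading of where the extra length in Theorem~\ref{thm:main} comes from, and consequently on a wrong structural claim about what cyclic $4$-edge-connectivity buys.

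In the paper, the \emph{intersecting} $5$-circuits (those in $\C^I_G(F)$) are the ones handled cleanly: Lemmas~\ref{l5u} and~\ref{l10} let one extend or modify the flow over them without introducing zeroes, so they contribute exactly $7/3$ per vertex to the cover bound (see the rows $U$, $P_1$, $P_2$ in Figure~\ref{figCovers}). The defect term $+2/3\cdot|\C^N_G(F)|$ in Lemma~\ref{lemmamain} comes entirely from the \emph{non-intersecting} $5$-circuits, those in $\C^N_G$. Your assertion that ``two $5$-circuits of $F$ intersecting through edges of $M$'' force extra length is backwards; and your claim that such a configuration yields a cyclic $2$- or $3$-edge-cut is simply false (the subgraph $S$ of Figure~\ref{fig10}, which is exactly the paired-intersecting case, has $|\partial_G(S)|=6$). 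Cyclic $4$-edge-connectivity therefore does not forbid these configurations, and the term you propose to drop is not the one contributing the excess.

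What the paper actually does for Theorem~\ref{thm:main2} is leave the cover construction and Lemma~\ref{lemmamain} untouched and instead sharpen the choice of perfect matching. Starting from the all-$1/3$ point in the perfect matching polytope, it lowers the coordinate on every matching-type edge joining two distinct circuits of $\C^N_G$ to $1/5$ and compensates along the two incident $5$-circuits; cyclic $4$-edge-connectivity is used precisely to certify that the perturbed vector stays in the polytope (all nontrivial odd cuts have size $\ge 5$) and that such an edge really joins two different $\C^N$-circuits. Feeding this point into Lemma~\ref{pmp} gives $|\C^N_G(F)|\le \tfrac{1}{6}|\C^N_G|-\tfrac{1}{15}a$, where $a$ counts those edges, and a double count of $\partial(C)$ over $C\in\C^N_G$ yields $|\C^N_G|\le (3|V(G)|+2a)/20$. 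Combining these with Lemma~\ref{lemmamain} the $a$-terms cancel and one lands exactly at $47/30\cdot m$. None of this involves eliminating intersecting $5$-circuits.
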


\medskip 

The paper is organised as follows. 
In Section~2 we show how one can modify $(\Z_2 \times \Z_2)$-flows.
We prove all flow manipulating lemmas used in the rest of the paper. 
Given a fixed $2$-factor, to perform flow manipulating operations 
we have to process some of the $5$-circuits of the $2$-factor in a specific order, 
while some others have to be processed in pairs. 
In Section~3 we specify the required order and pairs.
In Section~4 we use the lemmas from Section~2 to construct a $(\Z_2\times Z_2)$-flow 
that has special properties with respect to the $2$-factor and how the pairs and order of $5$-circuits were chosen.  
In Section~5 we define two cycle covers 
based on the $2$-factor, 
on the way how the pairs and the order of $5$-circuits were chosen, 
and on the flow. 
We bound the length of the cycle covers by using these properties.
We combine the two bounds to obtain a new bound that depends only on the number of edges in the graph
and the number of $5$-circuits in the $2$-factor that do not intersect other $5$-circuits in the graph.
Finally, in Section~6 we show that there exists a $2$-factor with few $5$-circuits that do not intersect other $5$-circuits in the graph, which completes the proofs of Theorems~\ref{thm:main} and~\ref{thm:main2}.

\section{Chain extensions and modifications}

Let $G$ be a graph and $v$ be a vertex of $G$. Let $\partial_G(v)$ be the set of edges 
with exactly one end in $v$. For a set $A \subseteq V(G)$, let $\partial_G(A)$ be 
the set of edges with exactly one end in $A$. 
For a subgraph $H$ of let $\partial_G(H)$ be the multiset of edges from $E(G)-E(H)$
that contains each edge $e \in E(G)$ as many times as is the number 
of vertices from $V(H)$ incident with $e$. 
Moreover, if $G$ is clear from context, we omit $G$ from the notation.
To work with multisets that arise from the definition of $\partial_G(H)$ we use the following operations. 
In the operation $A \cap B$ ($A-B$), the first operand $A$ may be a multiset, while the second operand $B$
is always an ordinary set. By $A \cap B$ ($A-B$) we denote the multiset that contains elements from $A$
that are in $B$ (that are not in $B$) the same number of times as they are in $A$.

All flows in this article are in $(\Z_2\times\Z_2)$. 
We use colour symbols $\cR$, $\cG$, $\cB$ for non-zero elements of $(\Z_2\times\Z_2)$ and $0$ for $(0,0)$.
As $\Z_2\times\Z_2$ has only involutions  we use the simplified definition of the notions chain and flow, omitting orientations. 
Let $G$ be a graph.
A \emph{chain} $\phi$ on $G$ is a function $\phi \ : \ E(G) \to \Z_2\times\Z_2$.
A vertex $v \in V(G)$ is a \emph{zero-sum vertex} in $\phi$ if $\sum_{e\in \partial_G(v)} \phi(e)=0$.
A \emph{flow} $\phi$ is a chain whose all vertices are zero-sum in $\phi$. 
For a chain $\phi$, let the \emph{zeroes of $\phi$}, denoted by $Z(\phi)$, be the set of all edges $e\in E(G)$
such that $\phi(e) = 0$.
A \emph{nowhere-zero} flow is a flow $\phi$ such that $Z(\phi)=\emptyset$.
Let $H$ be a connected subgraph of $G$ and let $\phi$ 
be a chain on $G$. The subgraph $H$ has a \emph{nowhere-zero boundary} 
in $\phi$ if $\partial_G(H) \cap Z(\phi)=\emptyset$.
The chain $\phi$ is \emph{$H$-extensible} if  each $v \in V(G)-V(H)$ is zero-sum in $\phi$.
We call a flow $\phi'$ on $G$ 
\emph{an $H$-extension of the $H$-extensible chain $\phi$} if for each $e\in E(G)- E(H)$ $\phi(e)=\phi'(e)$.

The Parity Lemma restricts the flow values on the edges from $\partial_G(H)$ in an $H$-extensible chain.
\begin{lemma}\label{parity}
Let $G$ be a graph, let $H$ be a connected subgraph of $G$ and let $\phi$ be a $H$-extensible chain on $G$.
Then for each $a\in\{\cR, \cG, \cB\}$
$$
|\partial_G(H)-Z(\phi)| \equiv |\partial_G(H) \cap \phi^{-1}(a)| \ \ \ \ (\bmod{\ 2}).
$$
\end{lemma}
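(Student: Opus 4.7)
The plan is a standard double-counting argument carried out in the abelian group $\Z_2\times\Z_2$. I would sum $\phi(e)$ over the multiset of edges incident to vertices of $H$, that is, compute
$$
S := \sum_{v\in V(H)}\ \sum_{e\in\partial_G(v)} \phi(e).
$$
Each edge of $E(H)$ contributes to $S$ exactly twice (once for each endpoint), so its contribution is $2\phi(e)=0$ in $\Z_2\times\Z_2$. Each other edge $e$ contributes $\phi(e)$ once for every endpoint of $e$ lying in $V(H)$, which is exactly the multiplicity of $e$ in $\partial_G(H)$. Hence $S=\sum_{e\in\partial_G(H)}\phi(e)$, where the sum respects multiplicities.

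Next, I would compute $S$ another way: sum the zero-sum identity $\sum_{e\in\partial_G(v)}\phi(e)=0$ over all $v\in V(G)$, which gives $0$, and subtract the sum over $v\in V(G)-V(H)$. The latter is $0$ term by term because $\phi$ is $H$-extensible. Therefore $S=0$, i.e.
$$
\sum_{e\in \partial_G(H)} \phi(e)\ =\ 0 \quad \text{in } \Z_2\times\Z_2.
$$

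Now I would translate this identity into the desired parity statement. Writing $n_a := |\partial_G(H)\cap\phi^{-1}(a)|$ for $a\in\{\cR,\cG,\cB\}$, the identity above reads $n_\cR\cdot\cR+n_\cG\cdot\cG+n_\cB\cdot\cB = 0$, which depends only on the parities of $n_\cR,n_\cG,n_\cB$. Using $\cR+\cG+\cB=0$ and inspecting the $2^3$ parity patterns (or simply noting that the three nonzero elements of $\Z_2\times\Z_2$ are pairwise distinct and nonzero), one sees the identity holds if and only if $n_\cR,n_\cG,n_\cB$ all have the same parity. Since $|\partial_G(H)-Z(\phi)|=n_\cR+n_\cG+n_\cB$, this common parity coincides with the parity of $|\partial_G(H)-Z(\phi)|$, which is exactly the claim.

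I do not expect a real obstacle: the proof is essentially the classical handshake argument over $\Z_2\times\Z_2$. The only subtle point to be careful about is the multiset nature of $\partial_G(H)$ when $H$ is not induced (edges with both ends in $V(H)$ that are not in $E(H)$ appear with multiplicity $2$); handling this correctly is what makes the first step, the double count of $S$, match the multiplicities in the statement of the lemma.
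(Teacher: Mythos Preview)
Your argument is correct; the double-count over $\Z_2\times\Z_2$ is exactly the standard proof of the Parity Lemma, and your handling of the multiset $\partial_G(H)$ matches the paper's conventions. Note that the paper itself states this lemma without proof, treating it as a well-known fact, so there is nothing to compare against.
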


The following lemma allows us extend a $H$-extensible chain on $G$ into a flow.
\begin{lemma}\label{extend}
Let $G$ be a graph, let $H$ be a subgraph of $G$
and let $\phi$ be an~$H$-extensible chain on $G$. Then there exists a flow
on $G$ that is an $H$-extension of $\phi$.
\end{lemma}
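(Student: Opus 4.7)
The plan is to construct $\phi'$ as $\phi + \psi$, where $\psi : E(G) \to \Z_2 \times \Z_2$ is a correction chain supported on $E(H)$. Such a $\phi'$ automatically agrees with $\phi$ off $E(H)$, and remains zero-sum at every $v \in V(G) - V(H)$ because $\phi$ already is there, by $H$-extensibility. Writing $d(v) := \sum_{e\in \partial_G(v)} \phi(e)$ for the defect of $\phi$, the problem reduces to finding $\psi : E(H) \to \Z_2\times\Z_2$ with $\sum_{e \in \partial_H(v)} \psi(e) = d(v)$ at every $v \in V(H)$.

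I would handle each connected component $H'$ of $H$ separately. Pick a spanning tree $T$ of $H'$ rooted at an arbitrary $r \in V(H')$; set $\psi \equiv 0$ on the edges of $H'$ outside $T$; and for each non-root vertex $v$ with parent $p$ put
$$
\psi(vp) \;:=\; \sum_{u \in S_v} d(u),
$$
where $S_v\subseteq V(H')$ is the vertex set of the subtree of $T$ hanging below $v$ (including $v$). Processing $T$ from the leaves upwards, a direct telescoping calculation shows $\sum_{e\in\partial_H(v)} \psi(e) = d(v)$ for every non-root $v$: the subtree-sums assigned to the tree edges going down from $v$ cancel, in $\Z_2 \times \Z_2$, against the corresponding summands appearing inside $\psi(vp) = \sum_{u \in S_v} d(u)$, leaving only $d(v)$.

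The only remaining equation is the one at the root, which reduces to the compatibility condition $\sum_{v \in V(H')} d(v) = 0$. I expect this to be the only real step: it follows from the universal identity $\sum_{v \in V(G)} d(v) = 2\sum_{e \in E(G)} \phi(e) = 0$ together with $H$-extensibility, which confines all defect to $V(H)$. Once the compatibility is verified the spanning-tree construction above produces $\psi$, and $\phi' := \phi + \psi$ is the required $H$-extension of $\phi$; everything beyond the sum-is-zero observation is routine bookkeeping along the tree.
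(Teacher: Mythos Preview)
Your proof is correct and follows essentially the same idea as the paper's: reduce to a spanning tree of $H$ and fix the defects leaf-to-root, with the root equation handled by the global identity $\sum_{v\in V(G)} d(v)=0$; the paper phrases this as an induction on $|E(H)|$ (peeling off a cycle edge or a leaf at each step), while you give the equivalent closed-form subtree-sum construction. One small remark: your component-by-component treatment is unnecessary because in the paper $H$ is connected (this is part of how $H$-extensibility is set up, and the paper's base case ``$H$ is an isolated vertex'' relies on it)---and indeed the lemma would fail for disconnected $H$, so your global-sum argument for $\sum_{v\in V(H')}d(v)=0$ only goes through because $H'=H$.
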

\begin{proof}
Proof by induction on $|E(H)|$. If $|E(H)|=0$, then $H$ is an isolated vertex, we denote it $v$. 
As $\Z_2\times\Z_2$ has only involutions,
$$
0=\sum_{w\in V(G)-\{v\}} \sum_{e\in \partial_G(w)} \phi(e)= \sum_{e\in \partial_G(v)} \phi(e).
$$
Thus $\phi$ is the flow required by this lemma.

If $|E(H)|>0$ and $H$ is not a tree, then let $e$ be an edge of a circuit of $H$.
The induction hypothesis for the subgraph $H-e$ guarantees the existence of the required flow.

If $|E(H)|>0$ and $H$ is a tree, then let $v$ be a leaf of $H$ and let 
$e$ be the edge of $H$ incident with $v$.
We set $\sum_{e'\in \partial_G(v)-\{e\}} \phi(e')$ to be the flow value of $e$ and we remove $v$ from $H$.
The assumptions of the induction hypothesis are satisfied and the conclusion guarantees the existence of the required flow.
\end{proof}

Let $G$ be a graph, let $H$ be a connected subgraph of $G$, and let $\phi$ 
be an $H$-extensible chain. An $H$-extensible chain $\phi'$ is 
an \emph{$H$-modification} of $\phi$ if $Z(\phi)-E(H)=Z(\phi')-E(H)$.
A simple way to make a $H$-modification is by using switches of Kempe chains.
The following two lemmas help us to perform such $H$-modifications
and track the changes of the flow values on $\partial_G(H)$.
\begin{lemma}\label{mod1}
Let $G$ be a graph, let $H$ be a connected subgraph of $G$ and let $\phi$ be a $H$-extensible chain on $G$.
Let $e_1$ and $e_2$ be two distinct edges from $\partial_G(H) - Z(\phi)$ and 
let $x$ and $y$ be two distinct non-zero elements of $\Z_2\times \Z_2$ such that $\{\phi(e_1),\phi(e_2)\} \subseteq \{x, y\}$.
Assume that $e_1$ and $e_2$ have a common endvertex from $V(G)-V(H)$.
Then there exists an $H$-extensible chain $\phi'$ that is an $H$-modification of $\phi$
such that 
$\phi'(e)=\phi(e)$ for $e \in \partial_G(H)-\{e_1, e_2\}$
and
$\phi'(e)=\phi(e)+x+y$ for $e \in \{e_1,e_2\}$.
\end{lemma}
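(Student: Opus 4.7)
The plan is to simply swap the flow values of $e_1$ and $e_2$ between $x$ and $y$ while leaving every other edge of $G$ untouched; no modification inside $E(H)$ is actually needed. Concretely, I will define $\phi'(e_i) = \phi(e_i) + x + y$ for $i \in \{1,2\}$ and $\phi'(e) = \phi(e)$ for all other $e \in E(G)$. Since $\phi(e_i) \in \{x, y\}$, adding the third nonzero element $x + y$ of $\Z_2 \times \Z_2$ toggles $\phi(e_i)$ to the opposite value in $\{x, y\}$; in particular $\phi'(e_i)$ remains nonzero. The two explicit formulas claimed for $\phi'$ on $\partial_G(H)$ then hold by construction, and $Z(\phi') - E(H) = Z(\phi) - E(H)$ because $\phi$ and $\phi'$ differ only on $\{e_1, e_2\}$, and these edges are nonzero in both chains. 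So $\phi'$ is an $H$-modification of $\phi$ provided it is $H$-extensible.

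The only real thing to verify is that $\phi'$ is zero-sum at each vertex $w \in V(G) - V(H)$. Let $v \in V(G) - V(H)$ be the common endvertex of $e_1$ and $e_2$ guaranteed by the hypothesis. Because $e_1, e_2 \in \partial_G(H)$ and their shared endvertex $v$ lies outside $V(H)$, the other endvertex of each $e_i$ must lie in $V(H)$. Hence for any $w \in V(G) - V(H)$ with $w \ne v$, neither $e_1$ nor $e_2$ is incident with $w$, so $\phi'$ and $\phi$ agree on $\partial_G(w)$ and the zero-sum property is inherited from $\phi$.

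At $v$ itself, the change in the sum $\sum_{e \in \partial_G(v)} \phi'(e) - \sum_{e \in \partial_G(v)} \phi(e)$ equals $(x+y) + (x+y) = 2(x+y) = 0$ in $\Z_2 \times \Z_2$, since exactly the two edges $e_1, e_2$ contribute a change and each contributes $x + y$. Therefore the zero-sum at $v$ is preserved as well, and $\phi'$ is $H$-extensible.

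There is no real obstacle here; the proof is essentially the observation that a simultaneous swap of two edges of a Kempe pair $\{x, y\}$ at a single vertex is invisible to the zero-sum condition at that vertex, a feature special to $\Z_2 \times \Z_2$ where every nonzero element is its own inverse. This is why no compensating modifications inside $E(H)$ are needed, in contrast to the more general Kempe-chain arguments that the later lemmas will presumably require when $e_1$ and $e_2$ do not already share a vertex outside $V(H)$.
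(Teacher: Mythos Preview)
Your proof is correct and follows essentially the same approach as the paper: define $\phi'$ by adding $x+y$ to the values on $e_1$ and $e_2$ only, observe that the new values remain nonzero so the zero set outside $E(H)$ is unchanged, and verify the zero-sum condition at the common endvertex $v$ (and trivially elsewhere) using $(x+y)+(x+y)=0$ in $\Z_2\times\Z_2$. Your write-up is slightly more detailed than the paper's, but the argument is the same.
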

\begin{proof}
Let 
$\phi'(e)=\phi(e)$ for $e \in E(G)-\{e_1, e_2\}$
and
$\phi'(e)=\phi(e)+x+y$ for $e \in \{e_1,e_2\}$.
As both $\phi(e_1)$ and $\phi(e_2)$ are in $\{x,y\}$, both 
$\phi'(e_1)$ and $\phi'(e_2)$ are non-zero, thus $\phi'$ is the $H$-modification of $\phi$. 
For each vertex $v\in V(G)-V(H)$, $v$ is a zero-sum vertex in $\phi'$, even for the common endvertex of $e_1$ and $e_2$ as we add the same value to both $\phi(e_1)$ and $\phi(e_2)$. Thus $\phi'$ is an $H$-exensible chain.
It follows that $\phi'$ is an $H$-modification of $\phi$ with all properties required by this lemma.
\end{proof}

\begin{lemma}\label{mod2}
Let $G$ be a graph, let $H$ be a connected subgraph of $G$ and let $\phi$ be an $H$-extensible chain on $G$.
Let $f$ be an edge from $\partial_G(H) - Z(\phi)$ and 
let $x$ and $y$ be two distinct non-zero elements of $\Z_2\times \Z_2$ such that $\phi(f) \in \{x, y\}$.
Then there exists an edge $f'$ with $\phi(f')\in\{x,y\}$ 
such that the multiset $\{f,f'\}$ is a submultiset of the multiset $\partial_G(H) - Z(\phi)$,
and an $H$-extensible chain $\phi'$ that is an $H$-modification of $\phi$
such that 
$\phi'(e)=\phi(e)$ for $e \in \partial_G(H)-\{f, f'\}$
and
$\phi'(e)=\phi(e)+x+y$ for $e \in \{f,f'\}$.
\end{lemma}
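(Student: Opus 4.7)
The plan is to carry out a Kempe-chain switch in the two colours $x$ and $y$. Set $z = x+y$ and let $K$ be the subgraph of $G$ with edge set $\phi^{-1}(\{x,y\})$. Applying Lemma~\ref{parity} to the one-vertex subgraph $\{v\}$ for every $v \in V(G) - V(H)$, or computing directly from the zero-sum condition at $v$, one sees that the three cardinalities $|\partial(v) \cap \phi^{-1}(x)|$, $|\partial(v) \cap \phi^{-1}(y)|$, $|\partial(v) \cap \phi^{-1}(z)|$ all have the same parity; in particular every such $v$ has even degree in $K$.

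If $f$ has both endpoints in $V(H)$ then $f$ appears with multiplicity $2$ in $\partial_G(H) - Z(\phi)$, and the claim is settled at once by taking $f' = f$, $\phi'(f) = \phi(f) + z$, and $\phi' = \phi$ on every other edge (no vertex of $V(G) - V(H)$ is incident with $f$). Otherwise write $f = uw$ with $u \in V(H)$ and $w \in V(G) - V(H)$. Then $w$ has odd degree in $K - f$ while every other vertex of $V(G) - V(H)$ still has even degree. Let $C$ be the component of $K - f$ containing $w$; the handshake lemma applied to $C$ forces another odd-degree vertex in $C$, and by the parity observation this vertex must lie in $V(H)$. Consequently $V(C) \cap V(H) \neq \emptyset$, and one may pick a shortest path $P = v_0 v_1 \cdots v_k$ in $C$ with $v_0 = w$ and $v_k \in V(H)$; minimality of $|E(P)|$ forces $v_1, \ldots, v_{k-1} \in V(G) - V(H)$, so the last edge $f' := v_{k-1} v_k$ lies in $\partial_G(H) - Z(\phi)$.

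Finally define $\phi'(e) = \phi(e) + z$ for $e \in \{f\} \cup E(P)$ and $\phi'(e) = \phi(e)$ otherwise. Because $\phi$ takes values in $\{x,y\}$ on all modified edges and adding $z = x+y$ swaps $x$ and $y$, $\phi'$ also takes values in $\{x,y\}$ on those edges; hence $Z(\phi') = Z(\phi)$, so $\phi'$ is an $H$-modification. The interior edges of $P$ join two vertices of $V(G) - V(H)$ and are therefore not in $\partial_G(H)$, so $\phi'$ and $\phi$ agree on $\partial_G(H) - \{f,f'\}$ and differ by $z$ on $\{f,f'\}$. For $H$-extensibility one verifies that the number of modified edges incident with each vertex of $V(G) - V(H)$ is even: the pair $\{v_{i-1}v_i, v_iv_{i+1}\}$ at each $v_i$ with $1 \le i \le k-1$, the pair $\{f, v_0v_1\}$ at $w$, and none at every other non-$H$ vertex. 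I expect the main subtlety to be precisely this choice of Kempe chain by a shortest path; a generic trail from $w$ might touch $V(H)$ at intermediate vertices and thereby inject unwanted modified edges into $\partial_G(H)$, breaking the requirement that only $f$ and $f'$ change on the boundary.
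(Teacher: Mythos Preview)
Your proof is correct and follows essentially the same Kempe-chain idea as the paper. The only cosmetic difference is that the paper works inside $G-V(H)$ (so the path automatically avoids $V(H)$ and any path suffices), whereas you work in all of $G$ and use a shortest path to keep intermediate vertices off $V(H)$; both devices serve the same purpose of ensuring that only $f$ and $f'$ among the modified edges lie in $\partial_G(H)$.
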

\begin{proof}
If $f$ has both endvertices on $V(H)$, then we set $f'=f$ and define 
$\phi'(f)=\phi(f)+x+y$ and $\phi'(e)=\phi(e)$ for edges $e$ other than $f$.
This chain fulfills the conditions of the lemma.

Thus assume that $f$ has one endvertex $v_1$ in $H$ and the other endvertex
$v_2$ outside $H$. 
Let $A$ be the subgraph of $G-V(H)$ induced by all the edges $e$ with $\phi(e)\in\{x,y\}$.
By Lemma~\ref{parity}, the component $B$ of $A$ that contains $v_2$ 
has $|\partial_G(B)\cap(\phi^{-1}(x) \cup \phi^{-1}(y))|$ even.
Therefore there is an edge $f'\in \partial_G(B)$ other than $f$ with $\phi(f')\in\{x,y\}$.
Let $w_2$ we the endvertex of $f'$ in $B$. The second endvertex must be in $H$.
Let $P$ we a path between $v_2$ and $w_2$ in $B$.

We define $\phi'$ as follows:
$\phi'(e)=\phi(e)$, for $e \in E(G)-\{f, f'\}-E(P)$,
and
$\phi'(e)=\phi(e)+x+y$, for $e \in \{f,f'\} \cup E(P)$.
As the edges $e\in\{f,f'\} \cup E(P)$ have $\phi(e)\in\{x,y\}$, also
$\phi'(e)\in\{x,y\}$ and thus $\phi'$ is non-zero on these edges. Moreover, each 
$v\in V(G)-V(H)$ is zero-sum in $\phi'$ as compared to $\phi$
the same value from $\Z_2\times\Z_2$ is added to exactly two edges from $\partial_G(v)$.
Thus we conclude that $\phi'$ is an $H$-modification of $\phi$ required by this lemma.
\end{proof}

\bigskip

Let $H$ be a connected subgraph of a graph $G$ an let $\phi$ be an $H$-extensible chain on $G$.
To facilitate the discussion on chain extensions we order the edges 
from the multiset $\partial_G(H)$ 
(edges occurring twice in the multiset will occur twice in the ordering). 
We call such ordering an \emph{$H$-boundary-ordering}.
If vertices of $H$ have degree three in $G$ and degree two or three in $H$, then we can give an $H$-boundery-ordering by a sequence of vertices of degree two in $H$---if we have such a sequence of vertices $v_0, v_1, \dots, v_{|\partial_G(H)|-1}$, then we obtain the
$H$-boundary-ordering  $e_0, e_1, \dots, e_{|\partial_G(H)|-1}$ by setting, for each $i$, 
$e_i$ to be the unique edge outside $H$ incident with $v_i$. 
Given an $H$-boundary-ordering $o$ and an $H$-extensible chain $\phi$,
we define $\phi(H,o)$ to be the string of elements $a_0a_1\dots a_{|\partial_G(H)|-1}$ 
from $\Z_2\times Z_2$ such that, for each $i$, $\phi(e_i)=a_i$. 
If $H$ is a circuit then a \emph{natural $H$-boundary-ordering} is an $H$-boundary-ordering
defined by a vertex-sequence where the vertices of $H$ are in a consecutive order in the circuit.

The following lemma allows us to extend a chain to some $5$-circuits without introducing a new zero value.
\begin{lemma}\label{lemma5c}
Let $G$ be a graph, 
let $C$ be a $5$-circuit of $G$ that contains only vertices of degree three in $G$, 
let $\phi$ be a $C$-extensible chain on $G$ such that $C$ has a nowhere-zero boundary in $\phi$,
let $o$ be a natural $C$-boundary-ordering, 
and let $\phi(C,o)=a_0a_1a_2a_3a_4$.
If $a_i=a_{i+1}=a_{i+2}$, for some $i\in\{0, \dots, 4\}$ with indices taken modulo $5$,
then there exist a flow $\phi'$ on $G$ that is a $C$-extension of $\phi$ such that
$Z(\phi') \cap E(C) = \emptyset$.
\end{lemma}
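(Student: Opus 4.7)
The plan is to reduce the problem to a short parity argument combined with a direct construction of flow values along the five edges of $C$.

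First, I would apply Lemma~\ref{parity} to $C$: since $\partial_G(C)$ consists of exactly five edges and all of them are non-zero, the number of edges of each of the three non-zero colours is odd, so the multiset $\{a_0,\dots,a_4\}$ has profile $(3,1,1)$. The hypothesis $a_i=a_{i+1}=a_{i+2}$ (WLOG $i=0$, so $a_0=a_1=a_2=:a$) forces the colour appearing three times to be $a$, and the remaining two positions carry the other two non-zero elements $b$ and $c$. In particular $\{a_3,a_4\}=\{b,c\}$ and $a_3+a_4 = b+c = a$.

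Next, I would define $\phi'$ to agree with $\phi$ outside $E(C)$ and label the edges of $C$ as $c_0,\dots,c_4$, with $c_j$ joining $v_j$ to $v_{j+1}$. The zero-sum condition at $v_j$ reads $\phi'(c_{j-1})+\phi'(c_j)=a_j$, and summing the five such equations gives $a_0+\dots+a_4 = 3a+(b+c) = 0$, so the system is consistent. Setting $\phi'(c_0)=:t$, the remaining values become $\phi'(c_1)=t+a$, $\phi'(c_2)=t+2a=t$, $\phi'(c_3)=t+a_3$, $\phi'(c_4)=t+a_3+a_4=t+a$, and the wrap-around constraint at $v_0$ is automatically satisfied.

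It remains to choose $t$ so that each of these five values is non-zero. Because $a_0=a_1=a_2$ collapses $\phi'(c_2)$ to $\phi'(c_0)$ and $\phi'(c_4)$ to $\phi'(c_1)$, the forbidden set reduces to $t\in\{0,a,a_3\}$, which excludes only three elements of the four-element group $\Z_2\times\Z_2$; the choice $t=a_4$ works and produces the sequence $a_4,a_3,a_4,a,a_3$ on $c_0,\dots,c_4$. The resulting chain is a flow (it is unchanged outside $V(C)$, so every vertex outside $C$ remains zero-sum by $C$-extensibility of $\phi$, and every vertex of $C$ is zero-sum by construction) and has no zero on $E(C)$, as required. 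I do not anticipate any genuine obstacle; the only point warranting care is verifying that the collapse caused by three consecutive equal $a_j$'s really does leave room in $\Z_2\times\Z_2$ to avoid all zeros simultaneously.
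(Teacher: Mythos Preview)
Your proof is correct and follows essentially the same approach as the paper: reduce by symmetry to $a_0=a_1=a_2$, invoke Lemma~\ref{parity} to force the $(3,1,1)$ colour profile, and then exhibit explicit nowhere-zero flow values on the five edges of $C$. Your parametrisation by $t$ and the resulting choice $t=a_4$ yield exactly the same assignment $a_4,a_3,a_4,a,a_3$ on $v_0v_1,\dots,v_4v_0$ that the paper writes down directly (namely $\cB,\cG,\cB,\cR,\cG$ under $a=\cR$, $a_3=\cG$, $a_4=\cB$).
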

\begin{proof}
Let us denote the vertices of $C$ so that $o$ may be defined  by the vertex-sequence $v_0, v_1, \dots, v_4$.
Due to the symmetry of $C$ we may without loss of generality assume $a_0=a_1=a_2$. 
As $\Z_2\times \Z_2$ has automorphisms between non-zero elements, we can without loss of generality assume $a_0=\cR$. Then by Lemma~\ref{parity} $\{a_3,a_4\}=\{\cG,\cB\}$.
As even with element $\cR$ fixed there is an automorphism between $\cG$ and $\cB$
in $\Z_2\times \Z_2$, we may, again, without loss of generality assume $a_3=\cG$ and
$a_4=\cB$. We define the flow $\phi'$ as follows.
Let $\phi'(e)=\phi(e)$ if $e\not\in E(C)$, $\phi(v_0v_1)=\cB$, $\phi(v_1v_2)=\cG$,
$\phi(v_2v_3)=\cB$, $\phi(v_3v_4)=\cR$ and $\phi(v_4v_0)=\cG$.
\end{proof}

While Lemma~\ref{lemma5c} allows us to extend a chain into a flow on a $5$-circuit $C$
provided that the edges from $\partial(C)$ have right chain values, the following lemma
allows us to modify a chain into a flow regardless of the chain values on $\partial(C)$. 
On the other hand, we require the existence of a special vertex next to the circuit. 

\begin{lemma}\label{l5u}
Let $G$ be a bridgeless graph and let $C$ be a $5$-circuit of $G$ that contains only 
vertices of degree three in $G$. Assume that there is a vertex $v \in V(G)-V(C)$ that is neighbouring
with at least two vertices of $C$. 
Let $\phi$ be an $C$-extensible chain on $G$ such that $C$ has a nowhere-zero boundary in $\phi$. 
There exists a flow $\phi'$ on $G$ that is a $C$-modification of $\phi$
such that $Z(\phi') \cap E(C) = \emptyset$.
\end{lemma}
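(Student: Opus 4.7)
The plan is to apply Lemma~\ref{mod1} at the common vertex $v$ (and Lemma~\ref{mod2} when needed) to modify $\phi$ so that the induced chain on $\partial_G(C)$ takes three cyclically consecutive equal values, and then invoke Lemma~\ref{lemma5c} to extend it to the required flow $\phi'$.

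First, by Lemma~\ref{parity} applied to $H=C$ together with the hypothesis that $C$ has a nowhere-zero boundary and $|\partial_G(C)|=5$, each non-zero element of $\Z_2\times\Z_2$ appears an odd number of times among the five boundary values, so the multiset of these values is $\{x,x,x,y,z\}$ with $\{x,y,z\}=\{\cR,\cG,\cB\}$. Up to rotation and reflection, the cyclic order of these values (in the natural $C$-boundary-ordering) is either the ``three consecutive equal'' pattern $xxxyz$, which is handled immediately by Lemma~\ref{lemma5c}, or the ``two-plus-one'' pattern $xxyxz$ in which the three $x$'s are not cyclically consecutive.

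In the two-plus-one case, let $e_i=vv_i$ and $e_j=vv_j$ be the two edges from $v$ to $C$. Since $v\in V(G)-V(C)$ is common to both, Lemma~\ref{mod1} applies at $v$ with any colour pair $\{x',y'\}$ containing both $\phi(e_i)$ and $\phi(e_j)$. A case analysis on the ten placements of $\{i,j\}$ relative to the pattern shows that in most of them a single Lemma~\ref{mod1} swap at $v$ (with an appropriately chosen colour pair) already yields three consecutive equal values; indeed, when both $e_i, e_j$ lie on $x$-positions one can flip both to whichever of $y,z$ makes the new triple consecutive, and when one of $\{e_i,e_j\}$ is an $x$-position and the other a singleton the unique available swap succeeds in all but two sub-placements. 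In the few remaining exceptional placements (the two ``bad'' mixed pairs and the pair of singleton slots), one first applies Lemma~\ref{mod2} to a carefully chosen boundary edge $e_k$ with $k\neq i,j$ and with a colour pair containing the repeated colour $x$, and then, if necessary, one final Lemma~\ref{mod1} swap at $v$. Lemma~\ref{lemma5c} on the resulting chain produces the flow $\phi'$; the result is a $C$-modification of the original $\phi$ since Lemmas~\ref{mod1},~\ref{mod2}, and~\ref{lemma5c} all preserve the zero set outside $E(C)$.

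The main technical obstacle is that the partner edge $f'$ returned by Lemma~\ref{mod2} is determined by the Kempe-chain component of the outside endpoint of the starting edge in the colour-pair subgraph of $G-V(C)$ and cannot be chosen freely. One must therefore verify, for each of the exceptional placements, that every possible outcome of Lemma~\ref{mod2} can be completed to a three-consecutive pattern by at most one further Lemma~\ref{mod1} swap at $v$.
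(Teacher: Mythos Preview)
Your approach is essentially the same as the paper's: reduce via Lemma~\ref{parity} to the $3{-}1{-}1$ boundary pattern, dispose of the three-consecutive case with Lemma~\ref{lemma5c}, and in the remaining pattern use Lemma~\ref{mod1} at the common vertex $v$ (preceded by a Lemma~\ref{mod2} step in the stubborn sub-cases) to reach a configuration to which Lemma~\ref{lemma5c} applies. The only difference is organisational---the paper fixes the two positions incident with $v$ and runs through the possible boundary words, whereas you fix the word and run through the ten positions of the pair---and the paper carries out the short case-by-case verification explicitly rather than deferring it as you do.
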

\begin{proof}
Let $C=v_0v_1v_2v_3v_4v_0$. Let $o=e_0e_1e_2e_3e_4$ be the $C$-boundary-ordering given by 
the vertex sequence $v_0$, $v_1$, $v_2$, $v_3$, $v_4$. 
We may without loss of generality assume that $v_0$ is a neighbour of $v$ and either $v_1$ or $v_2$ is a neighbour of $v$. 
Let $\phi(C,o)=a_0a_1a_2a_3a_4$. By Lemma~\ref{parity}, each element
from $\{\cR,\cG,\cB\}$ is used odd number of times in $\phi(C,o)$.
Due to automorphisms of $\Z_2\times\Z_2$ we may without loss of generality assume
that $\cR$ is used three times in $\phi(C,o)$ and $\cG$ is before $\cB$ in $\phi(C,o)$. 
Moreover, we may assume that
$\phi(C,o) \in \{\cR\cG\cR\cR\cB, \cG\cR\cB\cR\cR, \cR\cG\cR\cB\cR, 
\cR\cR\cG\cR\cB, \cG\cR\cR\cB\cR\}$, otherwise Lemma~\ref{lemma5c} gives
the desired $C$-modification of $\phi$. 
We conclude the proof by showing that in each case it is possible to make 
a $C$-modification of $\phi$ so that we can use Lemma~\ref{lemma5c} to get the 
$C$-modification of $\phi$ required by this lemma.

\medskip

Assume first that $v_0$ and $v_1$ are neighbors of $v$. 
Due to the symmetries of $C$, considering only automorphisms stabilizing the set $\{v_0,v_1\}$,  
it suffices to assume 
$\phi(C,o) \in \{\cR\cG\cR\cR\cB, \cR\cG\cR\cB\cR, \cR\cR\cG\cR\cB\}$.

If $\phi(C,o)=\cR\cG\cR\cR\cB$, then by Lemma~\ref{mod1}, 
with $e_0$ and $e_1$ as the two edges and $\cR$ and $\cG$ as the two elements of 
$\Z_2\times\Z_2$,
there exist a $C$-modification $\phi_1$ of $\phi$ such that $\phi_1(C,o)=\cG\cR\cR\cR\cB$. The chain $\phi_1$ satisfies the conditions of Lemma~\ref{lemma5c} as required.

If $\phi(C,o)=\cR\cG\cR\cB\cR$, then by Lemma~\ref{mod2}, 
with $e_3$ as the edge and $\cR$ and $\cB$ as the two elements of $\Z_2\times\Z_2$
there exist a $C$-modification $\phi_1$ of $\phi$ such that 
$\phi_1(C,o)\in\{\cB\cG\cR\cR\cR, \cR\cG\cB\cR\cR, \cR\cG\cR\cR\cB\}$.
If $\phi_1(C,o)\in\{\cB\cG\cR\cR\cR, \cR\cG\cB\cR\cR\}$, then $\phi_1$
is the sought modification of $\phi$, satisfying the conditions of Lemma~\ref{lemma5c}. 
If $\phi_1(C,o)=\cR\cG\cR\cR\cB$, then we proceed according to the case when
$\phi(C,o)=\cR\cG\cR\cR\cB$.

If $\phi(C,o)=\cR\cR\cG\cR\cB$, then by Lemma~\ref{mod1}, 
with $e_0$ and $e_1$ as the two edges and $\cR$ and $\cG$ as the two elements of $\Z_2\times\Z_2$,
there exist a $C$-modification $\phi_1$ of $\phi$ such that $\phi_1(C,o)=\cG\cG\cG\cR\cB$ which is the sought modification of $\phi$, satisfying the conditions of Lemma~\ref{lemma5c}.

\medskip

On the other hand, assume $v_0$ and $v_2$ are neighbors of $v$. 
Due to symmetries of $C$, considering only automorphisms stabilizing the set $\{v_0,v_2\}$,  
it suffices to assume 
$\phi(C,o) \in \{\cR\cG\cR\cR\cB, \cG\cR\cB\cR\cR,  
\cR\cR\cG\cR\cB\}$.

If $\phi(C,o)=\cR\cG\cR\cR\cB$, then by Lemma~\ref{mod1}, 
with $e_0$ and $e_2$ as the two edges and $\cR$ and $\cG$ as the two elements of $\Z_2\times\Z_2$,
there exist a $C$-modification $\phi_1$ of $\phi$ such that $\phi_1(C,o)=\cG\cG\cG\cR\cB$, which is the sought modification of $\phi$, satisfying the conditions of Lemma~\ref{lemma5c}.

If $\phi(C,o)=\cG\cR\cB\cR\cR$, then by Lemma~\ref{mod2}, 
with $e_1$ as the edge and $\cR$ and $\cG$ as the two elements of $\Z_2\times\Z_2$
there exist a $C$-modification $\phi_1$ of $\phi$ such that 
$\phi_1(C,o)\in\{\cR\cG\cB\cR\cR, \cG\cG\cB\cG\cR, \cG\cG\cB\cR\cG\}$.
If $\phi_1(C,o)\in\{\cR\cG\cB\cR\cR, \cG\cG\cB\cR\cG\}$, then $\phi_1$
is the sought modification of $\phi$ satisfying the conditions of Lemma~\ref{lemma5c}. 
If $\phi_1(C,o)=\cG\cG\cB\cG\cR$, then we proceed according to the case when
$\phi(C,o)=\cR\cR\cG\cR\cB$.

If $\phi(C,o)=\cR\cR\cG\cR\cB$, then by Lemma~\ref{mod1}, 
with $e_0$ and $e_2$ as the two edges and $\cR$ and $\cG$ as the two elements of $\Z_2\times\Z_2$,
there exists a $C$-modification $\phi_1$ of $\phi$ such that $\phi_1(C,o)=\cG\cR\cR\cR\cB$ which is the sought modification of $\phi$, satisfying the conditions of Lemma~\ref{lemma5c}.
\end{proof}

\bigskip

The argumentation where we describe how the desired $C$-modification is constructed using Lemmas~\ref{mod1}~and~\ref{mod2}
as written in the proof of Lemma~\ref{l5u} is very repetitive. 
To avoid repetition, we introduce a way how to certify that a modification of a chain satisfying some property can always be made.

Let $G$ be a graph, $H$ be a  subgraph of $G$,
$o=e_0\dots e_{|\partial_G(H)|-1}$ be an $H$-boundary-ordering and
let $\phi$ be an $H$-extensible chain.
If Lemma~\ref{mod1} is used with $e_i$ and $e_j$ as the two edges,
with $x$ and $y$ as the two elements of $\Z_2\times\Z_2$,
and Lemma~\ref{mod1} produces a chain $\phi'$,
then we write 
\begin{center}
\begin{tabular}{ccc}
$\phi(H,o)$& \myto{$xy$} & $\phi'(H,o)$
\end{tabular}
\end{center}
with $i$-th and $j$-th element of $\phi(H,o)$ underlined.
If Lemma~\ref{mod2} is used
with $e_i$ as the edge, with $x$ and $y$ as the two elements of $\Z_2\times\Z_2$,
and Lemma~\ref{mod2} produces a chain $\phi'$,
then we do not know exactly what $\phi'(H,o)$ is, but 
Lemma~\ref{mod2} gives us a list of possibilities
$A_0$, $A_1$, $\dots$ such that for some $j$,
$\phi'(H,o)=A_j$.
We record this as follows \newline
\begin{center}
\begin{tabular}{ccc}
$\phi(H,o)$ &\myto{$xy$}& $A_0$\\
            &           & $A_1$\\
            &           & $\dots$,              
\end{tabular}
\end{center}
with $i$-th element of $\phi(H,o)$ underlined.
Note that we know which lemma was used according to the number of underlined edge colours.
We finish each line by indicating the reason why the resulting 
chain modification has the desired property. If this last entry in a line is a sequence $A$ in brackets,
then it indicates that we complete the modification as in case $A$ (of course, we have to avoid cyclic references). It is possible that
$A$ differs from the value before $A$: an automorphism of $\Z_2\times\Z_2$ or/and 
an automorphism of $H$ might be applied,
provided that the autoromphisms preserve the desired property 
and preserve when Lemma~\ref{mod1} can be applied 
(e.g. in the proof of Lemma~\ref{l5u} we could use only automorphisms stabilising the set of vertices incident with
the two edges from $\partial(C)$ with the common neighbour). 

\medskip

As an example, we present how the modifications of $\phi$ from 
the proof of Lemma~\ref{l5u} in case $v_0$ and $v_1$
have a common neighbour outside $C$ are recorded. 
\begin{center}
\begin{tabular}{cccc}
\underline{RG}RRB &\myto{RG}& GRRRB & Lemma~\ref{lemma5c}\\
RGR\underline{B}R &\myto{RB}& BGRRR & Lemma~\ref{lemma5c}\\
                  &         & RGBRR & Lemma~\ref{lemma5c}\\
                  &         & RGRRB & [RGRRB]\\
\underline{RR}GRB &\myto{RG}& GGGRB & Lemma~\ref{lemma5c}
\end{tabular}
\end{center}
In case $v_0$ and $v_2$ have a common neighbour outside $C$ the modifications from Lemma~\ref{l5u} 
are recorded as follows. 
\begin{center}
\begin{tabular}{cccc}
\underline{R}G\underline{R}RB & \myto{RG} & GGGRB & Lemma~\ref{lemma5c}\\
G\underline{R}BRR             & \myto{RG} & RGBRR & Lemma~\ref{lemma5c}\\
                              & \myto{RG} & GGBGR & [RRGRB]\\
                              & \myto{RG} & GGBRG & Lemma~\ref{lemma5c}\\
\underline{R}R\underline{G}RB & \myto{RG} & GRRRB & Lemma~\ref{lemma5c}\\
\end{tabular}
\end{center}
\bigskip

We prove two lemmas of similar flavor, one concerning $6$-circuits and the second one concerning a special subgraph containing three $5$-circuits.
\begin{lemma}\label{l6}
Let $G$ be bridgeless graph,
let $C$ be a $6$-circuit of $G$ that contains only vertices of degree three in $G$ and
let $\phi$ be a $C$-extensible chain on $G$ such that $C$ has nowhere-zero boundary in $\phi$.
Let $o$ be a natural $C$-boundary ordering.
There exists a $C$-extensible chain $\phi'$ on $G$ that is a $C$-modification of $\phi$
such that $\phi(C,o)=a_0a_1a_2a_3a_4a_5$ satisfies at least one of the conditions from Figure~\ref{fig6}.
\end{lemma}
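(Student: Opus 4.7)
The plan is to enumerate, up to the dihedral symmetries of $C$ and the color automorphisms of $\Z_2\times\Z_2$, all possible values of $\phi(C,o)$ and, for each one that fails every condition listed in Figure~\ref{fig6}, exhibit a sequence of applications of Lemmas~\ref{mod1} and~\ref{mod2} that yields a $C$-modification whose boundary string does satisfy one of those conditions. The overall structure mimics that of Lemma~\ref{l5u}, now with a $6$-circuit in place of a $5$-circuit, and I would record the switches in the tabular notation introduced immediately before the statement.

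First, since $C$ has nowhere-zero boundary in $\phi$, each $a_i \in \{\cR,\cG,\cB\}$, and Lemma~\ref{parity} forces each of the three colors to appear an even number of times in $a_0a_1a_2a_3a_4a_5$. So the multiset of colors on $\partial_G(C)$ is one of $\{\cR^6\}$, a $(4,2,0)$ split such as $\{\cR^4\cG^2\}$, or $\{\cR^2\cG^2\cB^2\}$. Modding out by rotations and the reflection of $C$ together with the subgroup of color-permutations preserving the Figure~\ref{fig6} conditions leaves a short but nontrivial list of representative strings to examine.

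For each representative that already fulfills a condition of Figure~\ref{fig6}, nothing is required and $\phi'=\phi$ works. For each remaining representative I would look for two entries of $\phi(C,o)$ whose corresponding boundary edges share a common endpoint outside $C$ and carry two colors $x,y$ such that swapping them via Lemma~\ref{mod1} yields an acceptable string. When no such adjacent pair is usable, I would instead invoke Lemma~\ref{mod2} on a single boundary edge; since Lemma~\ref{mod2} returns a list of possible outcome strings rather than a single one, every string on that list must either satisfy a condition of Figure~\ref{fig6} directly or fall into a case already handled earlier, to which we then back-reference using the bracket notation illustrated in the tables following the proof of Lemma~\ref{l5u}.

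The main obstacles I anticipate are three. First, the case representatives must be chosen carefully so that the symmetry quotient is taken correctly and no pattern is overlooked, which is more delicate for a $6$-circuit than for a $5$-circuit because of the larger dihedral group and the richer color profile. Second, the multi-valued output of Lemma~\ref{mod2} must be controlled: since one cannot select which string is produced, every possibility on the returned list needs its own justification. Third, and most importantly, the back-references between cases must form a directed acyclic graph so the sequence of modifications terminates; I would therefore order the representatives by increasing ``distance'' from the Figure~\ref{fig6} targets, handling the closer ones first so that later cases can safely defer to earlier ones.
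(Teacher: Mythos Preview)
Your overall plan matches the paper's proof: reduce by parity and the dihedral/color symmetries to a short list of representative strings, dispose of those that already meet a Figure~\ref{fig6} condition, and for the rest perform Kempe-type switches recorded in the tabular notation, with back-references forming an acyclic dependency.

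There is, however, one genuine misstep. You propose to ``look for two entries of $\phi(C,o)$ whose corresponding boundary edges share a common endpoint outside $C$'' and apply Lemma~\ref{mod1}. But the statement of Lemma~\ref{l6} carries \emph{no} hypothesis analogous to the one in Lemma~\ref{l5u} guaranteeing such a shared neighbour; for a generic $6$-circuit the six boundary edges may go to six pairwise distinct vertices of $G-V(C)$. Hence Lemma~\ref{mod1} is simply unavailable in this proof, and the entire case analysis must be carried out with Lemma~\ref{mod2} alone. The paper does exactly this: every non-immediate case (there are only three after symmetry reduction, namely $\cR\cR\cR\cG\cR\cG$, $\cR\cR\cG\cB\cG\cB$, and $\cR\cG\cR\cB\cG\cB$) is handled by a single application of Lemma~\ref{mod2}, with each possible outcome either satisfying a condition directly or back-referencing another case.

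Since you already list Lemma~\ref{mod2} as your fallback, your argument is repairable rather than broken; just delete the Lemma~\ref{mod1} step and verify that Lemma~\ref{mod2} alone suffices for every bad representative.
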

\begin{figure}
\begin{center}
\begin{tabular}{|c|c|}
\hline
No.&Condition\\
\hline
1&$a_0=a_1$, $a_2=a_3$, $a_4=a_5$\\
2&$a_0=a_5$, $a_1=a_2$, $a_3=a_4$\\
\hline
3&$a_0=a_3$, $a_1=a_2$, $a_4=a_5$\\
4&$a_0=a_5$, $a_1=a_4$, $a_2=a_3$\\
5&$a_0=a_1$, $a_2=a_5$, $a_3=a_4$\\
\hline
6&$a_0=a_3$, $a_1=a_4$, $a_2=a_5$\\
\hline
\end{tabular}
\caption{The conditions that $\phi(C,o)=a_0a_1a_2a_3a_4a_5$ has to satisfy in Lemma~\ref{l6}.}\label{fig6}
\end{center}
\end{figure}
\begin{proof}
We may without loss of generality assume that 
$|\partial(C) \cap \phi^{-1}(\cR)| \ge |\partial(C) \cap \phi^{-1}(\cG)| \ge |\partial(C) \cap \phi^{-1}(\cB)|$.
By Lemma~\ref{parity}, 
$(|\partial(C) \cap \phi^{-1}(\cR)|, |\partial(C) \cap \phi^{-1}(\cG)|, |\partial(C) \cap \phi^{-1}(\cB)|)
\in \{(6,0,0), (4,2,0), (2,2,2)\}$.
Note that the set of conditions in Figure~\ref{fig6} is preserved by automorphisms of $C$ (the lines separating the conditions on Figure~\ref{fig6} separate conditions in different orbits under action of the automorphism group of $C$). 
Thus among the symmetric cases arising from different consecutive vertex orders
we consider only the cases where $\phi(C,o)$ is alphabetically maximal.
In what follows we show that in all these cases we can satisfy one of the conditions in 
Figure~\ref{fig6}. The values of $\phi(C,o)$ are sorted by $|\partial(C) \cap \phi^{-1}(\cR)|$
and then by the reverse alphabetical order. Note that $a_0=\cR$ and if $a_1\neq \cR$ then 
no colour can occour at two consecutive positions $\phi(C,o)$.
\begin{center}
\begin{tabular}{cccc}
RRRRRR &&& Condition 1\\
RRRRGG &&& Condition 1\\
RRRG\underline{R}G &\myto{RB}& BRRGBG &[RRGBGB]\\
                   &         & RBRGBG &[RGRBGB]\\
                   &         & RRBGBG &[RRGBGB]\\
RRGRRG &&& Condition 3\\                   
RRGGBB &&& Condition 1\\
RR\underline{G}BGB &\myto{GB}& RRBGGB & Condition 5\\
                   &         & RRBBBB & Condition 1\\
                   &         & RRBBGG & Condition 1\\
RRGBBG &&& Condition 5\\
\underline{R}GRBGB &\myto{RB}& BGBBGB & Condition 4\\
                   &         & BGRRGB & Condition 4\\
                   &         & BGRBGR & Condition 6\\
RGBRGB &&& Condition 6\\
\end{tabular}
\end{center}
\end{proof}

If $H$ is a subgraph of $G$ isomorphic to graph $S$ from Figure~\ref{fig10}, then
a natural $H$-boundary-ordering is an ordering of $\partial_G(H)$ given by the
vertex-sequence $v'_0$, $v'_1$, $v'_2$, $v'_3$, $v'_4$, $v'_5$, where vertices of $H$ 
are the isomorphic images the vertices $v_0$, $v_1$, $v_2$, $v_3$, $v_4$, $v_5$  from $S$, respectively.

\begin{lemma}\label{l10}
Let $G$ be bridgeless graph,
let $H$ be a subgraph of $G$ isomorphic to graph $S$ from Figure~\ref{fig10} that contains only vertices of degree three in $G$ and
let $\phi$ be an $H$-extensible chain on $G$ such that $H$ has a nowhere-zero boundary in $\phi$.
Let $o$ be a natural $H$-boundary-ordering.
There exists an $H$-extensible chain $\phi'$ on $G$ that is as an $H$-modification of $\phi$
such that $\phi(H,o)=a_0a_1a_2a_3a_4a_5$ satisfies at least one of the conditions from Figure~\ref{fig10}.
\end{lemma}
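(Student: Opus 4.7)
The plan is to mirror the strategy used in the proof of Lemma~\ref{l6}. Write $\phi(H,o) = a_0a_1a_2a_3a_4a_5$ with each $a_i \in \{\cR,\cG,\cB\}$, which is legitimate because $H$ has a nowhere-zero boundary in $\phi$. By Lemma~\ref{parity} applied to $H$ (which is connected), each non-zero element of $\Z_2\times\Z_2$ appears an even number of times in this length-six string, so up to relabelling the colour distribution is one of $(6,0,0)$, $(4,2,0)$, $(2,2,2)$. I would then classify the strings $a_0\dots a_5$ up to the joint action of the automorphism group of $S$ on $\{v_0,\dots,v_5\}$ and of those automorphisms of $\Z_2\times\Z_2$ that stabilise the list of target conditions in Figure~\ref{fig10}, keeping only one representative from each orbit (e.g.\ the alphabetically largest, as is done in Lemma~\ref{l6}).

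For each surviving representative, I would either verify directly that some condition from Figure~\ref{fig10} already holds, or exhibit a short sequence of $H$-modifications, via Lemmas~\ref{mod1} and~\ref{mod2}, that transforms $\phi(H,o)$ into a string satisfying such a condition. The crucial point is that Lemma~\ref{mod1} is applicable only to pairs of boundary edges whose endpoints outside $H$ coincide; the particular structure of $S$ (three interacting $5$-circuits) determines which pairs of indices $(i,j)$ are admissible, and these are read off once from the picture and then used throughout. Whenever no admissible pair is available at a given step, Lemma~\ref{mod2} produces a short list of possible outcomes; each outcome is either directly conclusive, or routed, via the arrow notation introduced after Lemma~\ref{l5u}, back to a canonical string already handled. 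The entire argument is then compressed into a table of one-line reductions analogous to the one at the end of the proof of Lemma~\ref{l6}.

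The main obstacle I expect is bookkeeping rather than ideas. The condition list in Figure~\ref{fig10} is less symmetric than in the $6$-circuit case, and the automorphism group of $S$ is considerably weaker than the dihedral group of a $6$-circuit, so there are more canonical strings to process and the symmetry reductions spare less work. Consequently, I have to lay out the cases so that cross-references between them form a directed acyclic graph; once this ordering is fixed and the admissible index pairs for Lemma~\ref{mod1} are identified from the specific structure of $S$, each individual case reduces to a short $\Z_2\times\Z_2$ calculation of exactly the same type that appears in Lemma~\ref{l6}, and nothing deeper than the modification lemmas already proved in Section~2 is needed.
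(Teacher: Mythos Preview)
Your overall plan is correct and is exactly the paper's approach: reduce by parity and by the symmetries of $S$ and of $\Z_2\times\Z_2$, then dispatch each canonical boundary string either directly or via a short chain of modifications, recorded in a table as after Lemma~\ref{l5u}.

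There is one misconception to fix before you execute it. You write that ``the particular structure of $S$ \dots\ determines which pairs of indices $(i,j)$ are admissible'' for Lemma~\ref{mod1}. It does not. Lemma~\ref{mod1} requires the two boundary edges to share an endvertex in $V(G)-V(H)$, and nothing in the hypotheses of Lemma~\ref{l10} constrains $G$ outside $H$; unlike Lemma~\ref{l5u}, there is no assumed vertex $v\notin V(H)$ with two neighbours among $v_0,\dots,v_5$. Hence you have \emph{no} admissible pairs for Lemma~\ref{mod1}, and the entire case analysis must run on Lemma~\ref{mod2} alone. This is precisely what the paper does (and what the proof of Lemma~\ref{l6} does as well, so ``mirror Lemma~\ref{l6}'' is the right instinct). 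The practical consequence is that every modification step branches into several possible outcomes rather than one, so the table is a bit longer than you may expect; but once you drop Lemma~\ref{mod1} from the toolkit, the rest of your outline goes through verbatim. Note also that the automorphism group of $S$ has order~$2$ (swapping $v_0\leftrightarrow v_2$, $v_3\leftrightarrow v_5$), so the symmetry reduction is indeed weaker than for the $6$-circuit, exactly as you anticipated.
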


\begin{figure}
\begin{center}
\includegraphics{fig-1}

\bigskip

\begin{tabular}{|c|c|}
\hline
No.&Condition\\
\hline
1&$a_0=a_1, a_2=a_3, a_4=a_5$\\
2&$a_0=a_5, a_1=a_2, a_3=a_4$\\
\hline
3&$a_0=a_1, a_2=a_4, a_3=a_5$\\
4&$a_0=a_4, a_1=a_2, a_3=a_5$\\
\hline
5&$a_0=a_3, a_1=a_5, a_2=a_4$\\
6&$a_0=a_4, a_1=a_3, a_2=a_5$\\
\hline
7&$a_0=a_4, a_1=a_5, a_2=a_3$\\
8&$a_0=a_5, a_1=a_3, a_2=a_4$\\
\hline
\end{tabular}
\caption{Graph $S$ and the conditions that $\phi(H,o)=a_0a_1a_2a_3a_4a_5$ has to satisfy in Lemma~\ref{l10}.}\label{fig10}
\end{center}
\end{figure}

\begin{proof} 
We may without loss of generality assume that 
$|\partial(H) \cap \phi^{-1}(\cR)| \ge |\partial(H) \cap \phi^{-1}(\cG)| \ge |\partial(H) \cap \phi^{-1}(\cB)|$.
By Lemma~\ref{parity}, 
$(|\partial(H)\cap\phi^{-1}(\cR)|, |\partial(H) \cap \phi^{-1}(\cG)|, |\partial(H) \cap \phi^{-1}(\cB)|)
\in \{(6,0,0), (4,2,0), (2,2,2)\}$.
Note that the set of conditions in Figure~\ref{fig10} is preserved by automorphisms of $H$
(besides identity, the only other automorphism takes $v_0$ to $v_2$ and $v_3$ to $v_5$; the lines separating the conditions on Figure~\ref{fig6} separate conditions in different orbits under action of the automorphism group of $H$). 
Among the symmetric cases arising from different vertex orders
we consider only the cases where $\phi(H,o)$ is alphabetically maximal.
In what follows we show that in all these cases we can satisfy one of the conditions from 
Figure~\ref{fig10}. The values of $\phi(H,o)$ are sorted by $|\partial(H) \cap \phi^{-1}(\cR)|$
and then by the reverse alphabetical order. Note that if $a_0a_1a_2=a_2a_1a_0$, then we can require
that $a_3a_4a_5$ is alphabetically behind $a_5a_4a_3$.
Also note that the case when
$\phi(H,o)=\text{RGBGRB}$ is symmetric to the case when $\phi(H,o)=\text{RGBRBG}$
and the case when $\phi(H,o)=\text{RGBBRG}$ is symmetric to the case when $\phi(H,o)=\text{RGBGBR}$.

\begin{center}
\begin{longtable}{cccc}
RRRRRR &&& Condition 1\\
RRRRGG &&& Condition 1\\
RRRGRG &&& Condition 3\\
RRGRRG &&& Condition 6\\
RRGRGR &&& Condition 5\\
RRGGRR &&& Condition 1\\
RGRRRG &&& Condition 5\\
RGRR\underline{G}R &\myto{RG}& GGRRRR & Condition 1\\
                   &\myto{RG}& RRRRRR & Condition 1\\
                   &\myto{RG}& RGGRRR & Condition 2\\
                   &\myto{RG}& RGRGRR & Condition 6\\
                   &\myto{RG}& RGRRRG & Condition 5\\
RGGRRR &&& Condition 1\\
G\underline{R}GRRR &\myto{GR}& RGGRRR & Condition 4\\
                   &\myto{GR}& GGRRRR & Condition 1\\
                   &\myto{GR}& GGGGRR & Condition 1\\
                   &\myto{GR}& GGGRGR & Condition 3\\
                   &\myto{GR}& GGGRRG & Condition 2\\
RRGGBB &&&  Condition 1\\
RRGBGB &&&  Condition 3\\
RR\underline{G}BBG &\myto{GB}& RRBGBG & Condition 3\\
                   &\myto{GB}& RRBBGG & Condition 1\\
                   &\myto{GB}& RRBBBB & Condition 1\\
\underline{R}GRGBB &\myto{GR}& GRRGBB & [RRGBBG]\\
                   &\myto{GR}& GGRRBB & Condition 1\\
                   &\myto{GR}& GGGGBB & Condition 1\\
\underline{R}GRBGB &\myto{GR}& GRRBGB & Condition 4\\
                   &\myto{GR}& GGRBRB & Condition 3\\
                   &\myto{GR}& GGGRGR & Condition 3\\
RGBR\underline{G}B &\myto{GR}& GGBRRB & [RRGBBG]\\
                   &\myto{GR}& RRBRRB & Condition 6\\
                   &\myto{GR}& RGBGRB & Condition 6\\
RGBRBG &&& Condition 5\\
RGBGBR &&& Condition 8\\
R\underline{G}BBGR &\myto{RG}& GRBBGR & Condition 7\\
                   &\myto{RG}& RRBBRR & Condition 1\\
                   &\myto{RG}& RRBBGG & Condition 1\\
\end{longtable}
\end{center}
\end{proof}

We conclude this section by the following reformulation of a results of Fan \cite{fan}.
\begin{lemma}\label{lfan}
Let $G$ be a bridgeless graph and let $C$ be a circuit of $G$ that contains only 
vertices of degree three in $G$.  
Let $\phi$ be an $C$-extensible chain on $G$. 
\begin{enumerate}
\item There exists a flow $\phi'$ on $G$ that is a $C$-extension of $\phi$
such that $|Z(\phi') \cap E(C)| \le |E(C)|/4$ and
\item If $|C|<20$, then 
there exists a flow $\phi''$ on $G$ that is a $C$-modification of $\phi$ such that 
$|Z(\phi'') \cap E(C)| < |E(C)|/4$.
\end{enumerate}
\end{lemma}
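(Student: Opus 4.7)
The plan is to treat both parts via an averaging argument over the set of $C$-extensions of $\phi$, strengthened for part~2 by a modification argument.

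For part~1, Lemma~\ref{extend} produces at least one $C$-extension $\phi_0$ of $\phi$. Any two $C$-extensions agree outside $E(C)$, so their difference is a chain supported on $E(C)$; for the difference to be a flow, each vertex of $C$ must be zero-sum, and since the boundary contribution is zero this forces the two $C$-edges at every such vertex to receive the same value. Traversing the circuit shows the difference is constant on $E(C)$. Conversely, for any $d \in \Z_2\times\Z_2$ the chain $\phi_d$ obtained from $\phi_0$ by adding $d$ to every edge of $C$ is again a flow (the shift contributes $2d=0$ at each vertex of $C$), so the $C$-extensions are exactly the four chains $\phi_d$. Since $|Z(\phi_d)\cap E(C)| = |\{e\in E(C):\phi_0(e)=d\}|$ and $\sum_{d\in\Z_2\times\Z_2}|\{e\in E(C):\phi_0(e)=d\}| = |E(C)|$, some $d$ achieves $|Z(\phi_d)\cap E(C)|\le |E(C)|/4$.

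For part~2, if $|E(C)|$ is not divisible by $4$ then $\lfloor |E(C)|/4\rfloor < |E(C)|/4$, and part~1 already yields the strict inequality. We are therefore left with $|E(C)|\in\{4,8,12,16\}$, and we may further assume each of $0,\cR,\cG,\cB$ occurs exactly $|E(C)|/4$ times on $E(C)$ in $\phi_0$ (otherwise some $\phi_d$ already beats the bound). To destroy this perfect balance I would apply Lemmas~\ref{mod1} and~\ref{mod2} to suitable pairs of edges of $\partial(C)$: a swap of two colours on the boundary shifts the cumulative sums that propagate around $C$, changing the multiplicities of colours on $E(C)$ in each of the four resulting extensions.

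The main obstacle is verifying that such a balance-breaking modification is always available. I expect to handle it by a finite case analysis in the style of the proofs of Lemmas~\ref{l5u}, \ref{l6} and~\ref{l10}: the hypothesis $|C|<20$ keeps the number of relevant boundary patterns bounded, and in each balanced configuration one should be able to locate two boundary edges on which a Lemma~\ref{mod2} swap strictly decreases the multiplicity of $0$ on $E(C)$ for at least one of the four extensions of the modified chain. The technically heaviest subcase should be $|E(C)|=16$, where the perfectly balanced distribution leaves the least immediate room for simple unbalancing moves.
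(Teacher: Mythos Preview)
Your argument for part~1 is correct and coincides with the paper's: extend $\phi$ to a flow via Lemma~\ref{extend}, observe that the four $C$-extensions differ by adding a constant from $\Z_2\times\Z_2$ to every edge of $C$, and apply pigeonhole.

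For part~2, however, there is a genuine gap. You correctly reduce to the perfectly balanced case on circuits of length $4,8,12,16$, but then only \emph{announce} a case analysis using Lemmas~\ref{mod1} and~\ref{mod2}, explicitly flagging the verification as ``the main obstacle'' and writing that you ``expect to handle it'' and that ``one should be able to locate'' the right swap. None of this is carried out. For $|C|=16$ the boundary has sixteen edges taking values in $\Z_2\times\Z_2$ subject only to the parity constraint, so the perfectly balanced configurations are numerous even up to symmetry; moreover, Lemma~\ref{mod2} does not let you choose the second edge $f'$, so a single swap need not land you in an unbalanced pattern, and iterated swaps may cycle. Completing this is precisely the substance of Fan's paper~\cite{fan}, and that is what the paper actually does: it invokes Lemma~3.1 and Theorem~3.3 of~\cite{fan} directly (after the standard translation between $(\Z_2\times\Z_2)$-flows and integer $4$-flows). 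Your route is not wrong in spirit, but as written part~2 is unproven, and finishing it from scratch would essentially amount to reproving Fan's result.
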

\begin{proof}
By Lemma~\ref{extend}, $\phi$ can be extended into a flow $\phi_1$ on $G$.
By pigeonhole principle there is an element $x \in \{0,\cR,\cG,\cB\}$ such that
$|\phi^{-1}(x) \cap E(C)| \le |E(C)|/4$. We set $\phi'(e)=\phi_1(e)$ for $e\not\in E(C)$
and $\phi'(e)=\phi_1(e)+x$ otherwise. The resulting flow satisfies the first part of the lemma.

Lemma 3.1 and Theorem 3.3 of \cite{fan} guarantee the existence of
a flow $\phi''$ which is a $C$-modification of $\phi_1$ (which is also a $C$-modification of $\phi$)
satisfying the second statement of  this lemma\footnote{Lemma 3.1 and Theorem 3.3 in \cite{fan} are stated for $4$-flows. However, well known results of Tutte (see e.g. 
Corollary 6.3.2 and Theorem 6.3.3. of \cite{diestel}) guarantee that we can convert a $(\Z_2\times\Z_2)$-flow into a $4$-flow and
vice versa without changing which edges have zero values assigned.}.
\end{proof}

\section{Intersecting $5$-circuits}

Let $G$ be a bridgeless cubic graph. Let $\C^I_G$ be the set of $5$-circuits of $G$ that intersect some other $5$-circuit of $G$ in exactly one edge or in exactly two adjacent edges ($I$ stands for ``intersecting''). 
Let $\C^N_G$ be the set of all $5$-circuits on $G$ that do not intersect other $5$-circuit from $G$
($N$ stands for ``non-intersecting'').
In case $G$ is clear from the context, we sometimes omit $G$ in $\C^I_G$ and $\C^N_G$ and in several derived notations used further in the paper.

Although, not all $5$-circuits are in $\C^I$ or $\C^N$, the circuits in the $2$-factors of our interests are 
either from $\C^I$ or from $\C^N$. 
By $G/F$ we denote the graph created by \emph{contracting $F$ in $G$}, 
that is by contracting all the edges from $E(F)$ in $G$. A graph is $5$-odd-edge-connected if for each
$A\subseteq V(G)$ $|\partial_G(A)| \not\in \{1, 3\}$.

\begin{lemma}
Let $G$ be a bridgeless cubic graph and let $F$ be a $2$-factor of $G$ such that $G/F$ is $5$-odd-edge-connected.
All $5$-circuits in $F$ are either from $\C^I_G$ or $\C^N_G$.
\end{lemma}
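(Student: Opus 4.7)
My plan has two steps: first, derive the structural consequence that any $5$-circuit $C\subseteq F$ has no chord, and then enumerate the possible edge-intersection patterns $S := E(C)\cap E(C')$ with another $5$-circuit $C'$ of $G$, showing that the only non-empty patterns that survive are ``one edge'' and ``two adjacent edges''.

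For the first step, $V(C)$ is the vertex set of a single component of $F$, hence $F$-closed, so the hypothesis gives $|\partial_G(V(C))|\notin\{1,3\}$. Each vertex of $C$ is incident with exactly one non-$F$ edge (degree $3$ minus $F$-degree $2$), and if $c$ of these are chords (have both ends in $V(C)$), then $|\partial_G(V(C))|=5-2c$. Since $c\in\{0,1,2\}$, the only value in $\{5,3,1\}$ avoiding $\{1,3\}$ is $5$, forcing $c=0$. Thus the only edges of $G$ with both ends in $V(C)$ are the five edges of $C$ itself.

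For the second step, label $V(C)=\{v_0,\dots,v_4\}$ cyclically with $e_i=v_iv_{i+1}$, and let $C'\neq C$ be a $5$-circuit with $S$ non-empty. I split into cases by $|S|$ and the arc structure of $S$ in $C$. Whenever the endpoints of edges in $S$ cover all of $V(C)$---which happens for $|S|\geq 4$, for $|S|=3$ with $S$ forming two arcs, or whenever the fifth vertex of $C'$ lies in $V(C)$---we have $V(C')=V(C)$, and the no-chord property forces $E(C')\subseteq E(C)$, so $C'=C$, contradiction. In the remaining cases, $V(C')=\{v_0,v_1,v_2,v_3,w\}$ with $w\notin V(C)$, after relabelling so that $v_4$ is the missing vertex. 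If $S=\{e_0,e_1,e_2\}$ is one arc of three edges, then $C'$ must close through $v_0w$ and $v_3w$; since $v_0,v_3$ have unique non-$F$ edges (distinct, as no chord), these must be the matching edges $m_0,m_3$, making $w$ incident with two non-$F$ edges, contradicting that any cubic vertex in $F$ has exactly one non-$F$ edge. If $S=\{e_0,e_2\}$ consists of two non-adjacent edges, then removing $e_0,e_2$ from $C'$ leaves a length-$1$ and a length-$2$ path whose four endpoints are $v_0,v_1,v_2,v_3$ paired so that each path has one endpoint from $\{v_0,v_1\}$ and one from $\{v_2,v_3\}$; enumerating the four pairings, the length-$1$ edge is either $e_1=v_1v_2$ (forcing $e_1\in S$ and contradicting $|S|=2$) or a chord of $C$, which is ruled out by step one.

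The main obstacle is only to keep the case analysis tidy, which is easy because a subset of the edges of a $5$-cycle has at most two arcs, leaving few patterns to inspect. The no-chord consequence of step one is the essential lever: it simultaneously eliminates all configurations in which $V(C')=V(C)$ and disposes of three of the four subcases in the two-non-adjacent-edges scenario, reducing the remaining work to a short degree count at the extra vertex $w$.
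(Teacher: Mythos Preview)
Your proof is correct and follows essentially the same case analysis on $|E(C)\cap E(C')|$ as the paper. The only organizational difference is that you first extract the ``no chord'' consequence of $5$-odd-edge-connectivity and then use it uniformly, whereas the paper invokes the edge-cut size directly inside individual cases; your version is a bit tidier and handles the $V(C')=V(C)$ subcases more explicitly.
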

\begin{proof}
Let $C$ be a $5$-circuit of $F$ that is not in $\C^N$. Thus $C$ intersects a $5$-circuit $D$ of $G$.
If $|E(C) \cap E(D)|=4$, then $|\partial(V(C))| = 3$ (or $1$ if $C$ has two chords). This contradicts the $5$-odd-edge-connectivity of $G/F$.
If $E(C) \cap E(D)$ are three edges consecutive on $C$, then there is exactly one vertex in $V(D)-V(C)$.
This vertex has two neighbours in $C$, which contradicts with $C$ being a $2$-factor.
If $E(C) \cap E(D)$ are three non-consecutive edges, then $|\partial(V(C))| = 1$, a contradiction with $G$ being bridgeless.
If $E(C) \cap E(D)$ are two non-consecutive edges, then the unique vertex in $V(D)-V(C)$ has two neighbours in $C$,  which contradicts with that $C$ is in a $2$-factor.
In all other cases $C$ is in $\C^I$ as required by the lemma statement.
\end{proof}

Let $F$ be a $2$-factor of $G$.
Let us denote $\C^N_G(F)$ the set of circuits from $\C^N_G$ that are in $F$ 
and $\C^I_G(F)$ the set of circuits from $\C^I_G$ that are in $F$.
Let $C\in\C^I(F)$. Consider the set ${\cal S}(C)$ of all circuits $C^*$ of $F$ other than $C$ such that there exists a $5$-circuit in $G$ that intersects both $C$ and $C^*$. 
Due to the definition of $\C^I(F)$, ${\cal S}(C)\neq \emptyset$.

In Section~4, we will create a flow on $G$ by contracting circuits of $F$ and then we undo these contractions one by one. When we restore a circuit from $\C^I_G(F)$, we would like to use Lemma~\ref{l5u}. 
The condition of Lemma~\ref{l5u} requiring two boundary edges towards sharing a vertex is satisfied when
some circuit in ${\cal S}(C)$ was not restored before restoring $C$. Thus it is critical to define order in which circuits from $\C^I_G(F)$ are restored. Unfortunately, it is not possible to define an ordering that would allow us
to use Lemma~\ref{l5u} in every case. However, for the remaining cases it turns out that we can form disjouint pairs of such $5$-circuits so that Lemma~\ref{l10} is applicable.

We fix a complete strict order $>_1$ on $\C^I_G(F)$.
For $C\in \C^I_G(F)$ we define $f_{>_1}(C)$ as follows. If ${\cal S}(C) - \C^I(F) \neq \emptyset$, then we set $f_{>_1}(C)$ to be an arbitrary circuit from ${\cal S} - \C^I(F)$.
Otherwise we set $f_{>_1}(C)$ to be the smallest element of ${\cal S}$ in $>_1$.
Let $\C^P_G(F,>_1)$ be the set of circuits $C\in \C^I_G(F)$ such that $f_{>_1}(C)\in C^P_G(F)$ 
and $f_{>_1}(f_{>_1}(C))=C$ ($P$ stands for ``paired''). As on $\C^P_G(F,>_1)$ the function $f_{>_1}$ is an involution without fixed point thus it naturally partitions the circuits of $\C^P_G(F,>_1)$ into pairs. 
Let ${\cal P}_G(F, >_1)=\{\{C, f_{>_1}(C)\}; \ C \in \C^P_G(F, >_1)\}$. 
Let $\C^U_G(F,>_1)=\C^I_G(F)-\C^P_G(F,>_1)$ 
($U$ stands for ``unpaired'').
We call circuits that are in the same set from ${\cal P}_G(F, >_1)$ \emph{paired}.

Newt we show that paired circuits are in some graph isomorphic to the graph $S$ from Figure~\ref{fig10},
so that Lemma~\ref{l10} is applicable. 

\begin{lemma}\label{cl3p}
Let $G$ be a bridgeless cubic graph and let $F$ be a $2$-factor of $G$ 
such that $G/F$ is $5$-odd-edge-connected. Let $>_1$ be an ordering of the circuits in $\C^I_G(F)$.
Then ${\cal P}_G(F, >_1)$ is a partition of the set $\C^P_G(F, >_1)$ into two element subsets such that
for each two element subset the two circuits in this subset are contained in a subgraph isomorphic to the graph $S$ from Figure~\ref{fig10}.
\end{lemma}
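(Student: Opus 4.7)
The plan has two parts: confirming the partition structure and locating the subgraph. For the partition, I would observe that on $\C^P_G(F,>_1)$ the map $f_{>_1}$ is a fixed-point-free involution. The defining conditions of $\C^P_G(F,>_1)$ force $f_{>_1}(f_{>_1}(C)) = C$ and $f_{>_1}(C) \in \C^I_G(F)$, which in turn gives $f_{>_1}(C) \in \C^P_G(F,>_1)$, and $f_{>_1}(C) \neq C$ because $C \notin {\cal S}(C)$ by definition. The orbits of this involution then give the pair decomposition ${\cal P}_G(F,>_1)$.

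For a pair $\{C,C^*\}$, the fact that $C^* \in {\cal S}(C)$ furnishes a $5$-circuit $D$ of $G$ intersecting both $C$ and $C^*$, each in one edge or in two adjacent edges. I would argue by a short case analysis on the intersection pattern, using two rigid constraints: $V(C) \cap V(C^*) = \emptyset$ since $C$ and $C^*$ are distinct components of the $2$-factor $F$, and each vertex of the cubic graph $G$ is incident with exactly one edge outside $F$. The ``two-and-two'' configuration is impossible because it would force $3+3=6$ vertices of $D$ into $V(C) \cup V(C^*)$, exceeding $|V(D)|=5$. In the ``one-and-one'' configuration, writing $D = d_0 d_1 d_2 d_3 d_4 d_0$ with $d_0 d_1 \in E(C)$ and the other shared edge $d_i d_{i+1} \in E(C^*)$ for $i \in \{1,2,3,4\}$, the positions $i \in \{1,4\}$ force a vertex of $D$ into $V(C) \cap V(C^*)$, while $i \in \{2,3\}$ leaves a vertex $d_j$ outside both circuits whose two $D$-neighbours lie in different $F$-components, making both $D$-edges at $d_j$ non-$F$-edges and contradicting cubicity. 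This cubicity-based exclusion is the only genuinely delicate step.

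Thus $D$ intersects one of the two circuits, say $C^*$, in two adjacent edges and the other in a single edge. Labelling the vertices of $D$ so that $d_0 d_1 \in E(C)$ and $d_2 d_3, d_3 d_4 \in E(C^*)$, the remaining two edges $d_1 d_2$ and $d_4 d_0$ of $D$ lie outside $F$. I would then set
\[
H = C \cup C^* \cup \{d_1 d_2,\, d_4 d_0\},
\]
and observe that $H$ has $10$ vertices, $12$ edges, with $d_0, d_1, d_2, d_4$ of degree $3$ and the remaining six vertices of degree $2$. Since the two degree-$3$ vertices of $C$ are adjacent in $C$ while the two degree-$3$ vertices of $C^*$ are joined in $C^*$ by a length-two path through the degree-$2$ vertex $d_3$, the isomorphism type of $H$ is uniquely determined and matches the graph $S$ of Figure~\ref{fig10}; since $C, C^* \subseteq H$ by construction, this completes the required containment.
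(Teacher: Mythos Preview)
Your proof is correct and follows essentially the same route as the paper's: verify that $f_{>_1}$ is a fixed-point-free involution on $\C^P$, take a $5$-circuit $D$ meeting both paired circuits, and use the cubic/$2$-factor constraint (a vertex of $D$ outside $V(C)\cup V(C^*)$ would carry two non-$F$-edges) to force $V(D)\subseteq V(C)\cup V(C^*)$, whence the copy of $S$. The one place to tighten is your opening assertion that $D$ meets each of $C,C^*$ ``in one edge or in two adjacent edges'': this does not come for free from $C,C^*\in\C^I$ (that only says each meets \emph{some} $5$-circuit in such a pattern), but rather from the disjointness of $V(C)$ and $V(C^*)$ together with the fact that two circuits in a cubic graph must share at least an edge, which forces $|V(D)\cap V(C)|,\,|V(D)\cap V(C^*)|\in\{2,3\}$ with consecutive vertices---exactly how the paper phrases it before invoking the same leftover-vertex contradiction.
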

\begin{proof}
Due to the definition of $\C^P(F, >_1)$, $f_{>_1}(C)$ is in $\C^P(F, >_1)$ thus the sets of ${\cal P}$
contain only elements from $\C^P(F, >_1)$. By the definition of $f_{>_1}(C)$, $f_{>_1}(C) \neq C$
and thus ${\cal P}(F, >_1)$ contains two element sets.
The sets in ${\cal P}$ are obviously non-empty and contain all elements of $\C^P(F, >_1)$. As $f_{>_1}(f_{>_1}(C))=C$ the sets in ${\cal P}$ do not intersect. Thus 
${\cal P}$ is a partition of $\C^P(F, >_1)$.

Let $\{C, C^*\} \in {\cal P}$. Due to the definition of $f_{>_1}(C)$, there exists a $5$-circuit $D$ of $G$ that intersects both $C$ and $C^*$. 
As in a cubic graph circuits may not intersect in only one vertex, $D$ intersects both $C$ and $C^*$ in two or three vertices and these vertices must be consecutive. Thus $|V(D)-V(C)-V(C^*)| \le 1$. Assume for contradiction that there exists 
a vertex $v\in V(D)-V(C)-V(C^*)$. But then two edges incident with $v$ are in $\partial(V(C) \cup V(C^*))$,
which is impossible since $C$ and $C^*$ are $2$-factor circuits.
Thus $V(D)\subseteq V(C) \cup V(C^*)$. It follows that the subgraph of $G$ with vertex set $V(C) \cup V(C^*)$
and edge set $E(C) \cup E(C^*) \cup E(D)$ is isomorphic to the graph $S$ from Figure~\ref{fig10}.
\end{proof}
It is possible, that the paired circuits are in more than one subgraph isomorphic to the graph $S$ from Figure~\ref{fig10}. We need to fix this subgraph. A function that assigns such a subgraph to two paired circuits is a \emph{$S$-assigning function}.

We conclude this sections with the following lemma that shows that for circuits from $\C^I_G(F)$ that are not paired, we can find an ordering that allows us to restore the circuits from $\C^I_G(F)$ so that Lemma~\ref{l5u} is applicable.
\begin{lemma}\label{cl3u}
Let $G$ be a bridgeless cubic graph and let $F$ be a $2$-factor of $G$ 
such that $G/F$ is $5$-odd-edge-connected. Let $>_1$ be an ordering of the circuits in $\C^I_G(F)$.
Then there exists a strict total order $>_2$ on $\C^U_G(F,>_1)$ such that
for each $C\in \C^U_G(F,>_1)$ there exists a circuit $C^*$ of $F$ such that $\partial_G(C)$ and $\partial_G(C^*)$
have at least two edges in common and  such that either
\begin{itemize}
\item $C^* \in \C^U_G(F,>_1)$ and $C >_2 C^*$, or
\item $C^* \not \in \C^U_G(F,>_1)$
\end{itemize}
\end{lemma}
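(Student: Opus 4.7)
The plan is to build $>_2$ greedily. At each step I would pick, from the set of still-unplaced circuits, a circuit $C$ that has a witness $C^*$ either outside $\C^U_G(F,>_1)$ or already placed; then $C$ becomes the next element in $>_2$. The whole argument reduces to showing that such a greedy step is always possible, which is a combinatorial claim about $\C^U_G(F,>_1)$.

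First I would record a structural consequence of the proof of Lemma~\ref{cl3p}: whenever $C$ and $C^*$ are distinct circuits of $F$ and a $5$-circuit $D$ of $G$ intersects both, the same ``two leaving edges at an outside vertex'' argument shows $V(D)\subseteq V(C)\cup V(C^*)$. Since $|V(D)|=5$ and $V(C)\cap V(C^*)=\emptyset$, the only possibilities are $|V(D)\cap V(C)|=3,\ |V(D)\cap V(C^*)|=2$ or vice-versa; a direct count of the edges of $D$ then shows that $D$ contributes exactly two edges to $\partial_G(C)\cap\partial_G(C^*)$. In particular every $C^*\in{\cal S}(C)$ satisfies $|\partial_G(C)\cap\partial_G(C^*)|\ge 2$, so any witness produced by the greedy process automatically fulfils the intersection condition required by the lemma.

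The heart of the proof is the following claim: for every nonempty $R\subseteq \C^U_G(F,>_1)$ there is some $C\in R$ with ${\cal S}(C)\not\subseteq R$. I would prove this by contradiction. Assuming ${\cal S}(C)\subseteq R$ for every $C\in R$, let $C_0=\min_{>_1}R$ and $C_1=f_{>_1}(C_0)$. Because ${\cal S}(C_0)\subseteq R\subseteq \C^I_G(F)$, the definition of $f_{>_1}$ forces $C_1=\min_{>_1}{\cal S}(C_0)$; by the hypothesis $C_1\in R$. The relation ${\cal S}$ is symmetric (the same $5$-circuit witnessing $C_1\in{\cal S}(C_0)$ witnesses $C_0\in{\cal S}(C_1)$), so $\min_{>_1}{\cal S}(C_1)\le_1 C_0$. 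But $f_{>_1}(C_1)=\min_{>_1}{\cal S}(C_1)\in R$, and $C_0$ is the $>_1$-minimum of $R$, giving $f_{>_1}(C_1)=C_0$. Thus $f_{>_1}(f_{>_1}(C_0))=C_0$ with $f_{>_1}(C_0)\in\C^I_G(F)$, so $C_0\in\C^P_G(F,>_1)$, contradicting $C_0\in\C^U_G(F,>_1)$.

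With the claim in hand I would construct $>_2$ inductively: set $R_1=\C^U_G(F,>_1)$, and at each stage $i$ use the claim to pick $C_i\in R_i$ together with a witness $C^*_i\in {\cal S}(C_i)- R_i$; declare $C_i$ to be the $i$-th smallest element of $>_2$, and put $R_{i+1}=R_i-\{C_i\}$. The witness $C^*_i$ is a circuit of $F$ with $|\partial_G(C_i)\cap\partial_G(C^*_i)|\ge 2$ by the first paragraph; and $C^*_i\notin R_i$ means either $C^*_i\notin \C^U_G(F,>_1)$ (the second bullet), or $C^*_i$ was placed in an earlier stage, hence $C_i>_2 C^*_i$ (the first bullet). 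The expected main obstacle is the combinatorial claim above: it relies crucially on the case ${\cal S}(C)\subseteq \C^I_G(F)$ of the definition of $f_{>_1}$, on the symmetry of ${\cal S}$, and on tying the ``minimum-in-$R$'' argument back to the definition of a paired circuit in $\C^P_G(F,>_1)$; everything else is bookkeeping.
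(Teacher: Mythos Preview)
Your proof is correct and takes a genuinely different route from the paper. The paper follows the forward orbit $C,\,f_{>_1}(C),\,f_{>_1}^2(C),\dots$ and argues that a return $f_{>_1}^k(C)=C$ with $k>2$ would force a strictly $>_1$-decreasing cycle, hence is impossible; acyclicity of this functional digraph on $\C^U_G(F,>_1)$ yields a strict partial order which is then extended to $>_2$, and the witness for every $C$ is simply $f_{>_1}(C)$. You instead run a greedy extraction, and your engine is the ``sink'' claim that every nonempty $R\subseteq\C^U_G(F,>_1)$ contains a circuit with ${\cal S}(C)\not\subseteq R$; your proof of that claim picks the $>_1$-minimum $C_0$ of $R$ and shows in two steps that $f_{>_1}^2(C_0)=C_0$, forcing $C_0\in\C^P_G(F,>_1)$. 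Your argument is shorter and avoids reasoning about arbitrarily long orbits; the paper's argument, on the other hand, pins down a canonical witness $C^*=f_{>_1}(C)$ and a canonical partial order, which is conceptually tidy but not actually needed downstream. Your preliminary paragraph verifying $|\partial_G(C)\cap\partial_G(C^*)|\ge 2$ for any $C^*\in{\cal S}(C)$ (via the $V(D)\subseteq V(C)\cup V(C^*)$ observation and a parity count of crossing edges) is a point the paper leaves implicit, so it is good that you spell it out.
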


\begin{proof}
Let $C\in \C^I(F)$. For a function $f$ by $f^i$ we mean the function $f$ iterated $i$-times.
Assume that for each positive integer $i$, $f_{>_1}^i(C) \in \C^I(F)$, $f_{>_1}^2(C) \neq C$, but
there exists an integer $k>2$ such that $f_{>_1}^k(C)=C$. 
By the definition of $f_{>_1}$ for circuit $f_{>_1}^i(C)$, for $i \in \{1, 3, \dots , 2k-3, 2k-1\}$, 
we have that $f_{>_1}^{i-1}(C)>_1 f_{>_1}^{i+1}(C)$ or $f_{>_1}^{i-1}(C) = f_{>_1}^{i+1}(C)$.
Assume for contradictuin that $f_{>_1}^{i-1}(C) = f_{>_1}^{i+1}(C)$. 
Then for all $j\ge i$ $f_{>_1}^{j-1}(C) = f_{>_1}^{j+1}(C)$. But thus 
$C=f_{>_1}^{2k}(C) = f_{>_1}^{2k+2}(C) = f_{>_1}^2(C)$, a contradiction. 
Thus  for $i \in \{1, 3, \dots , 2k-3, 2k-1\}$,  $f_{>_1}^{i-1}(C)>_1 f_{>_1}^{i+1}(C)$ and
$$
C=f_{>_1}^0(C)>_1 f_{>_1}^2(C)>_1 \dots >_1 f_{>_1}^{2k-2}(C)>_1 f_{>_1}^{2k}(C)=C,
$$ 
a contradiction. 

As $f_{>_1}(C) \neq C$ and as the circuits from $\C^I(F)$ such that $f_{>_1}^2(C) = C$ are in $\C^P(F,>_1)$,
we can define a partial strict order $>'_2$ on $\C^U(F,>_1)$ as follows: $C_1 >'_2 C_2$ 
if for some $i>1$, $f_{>_1}^i(C_1)=C_2$.  We arbitrarily extend
the strict partial order $>'_2$ into a strict total order $>_2$. 

Let $C\in \C^U(F,>_1)$. We show that $C^*=f_{>_1}(C)$ satisfies the conditions of this lemma.
Due to the definition of $f_{>_1}$, $C$ and $C^*$ are two circuits of $F$  
such that $\partial_G(C)$ and $\partial_G(C^*)$ have at least two edges in common.
If $C^* \not \in \C^U(F,>_1)$, then the conditions of this lemma are satisfied.
If $C^* \in \C^U(F,>_1)$, then $C>'_2 C^*$ and thus $C>_2 C^*$ as required by this lemma.
\end{proof}

\section{Finding the starting flow}

In the following sections we will describe several constructions of subgraphs, chains, and vectors with desired property. In principle, the constructions are procedural and thus we will describe them in a procedural way
using variables. Before each procedure we clarify, what are the variables in the procedure.
By $A:=B$, where $A$ is a variable we mean that the value of $A$  is set to $B$. To ease the analysis of the procedure we complement the steps of the procedure by comments which describe the actual properties of the variables at respective step. To ease the analysis of the loops in the procedure we provide properties that variables satisfy at the start of each loop.

Let $G$ be a graph, let $H$ be a subgraph of $G$. 
In the graph $G/H$ each component $X$ of $H$ is contracted into a vertex $v_X$. 
If a variable $A$ is a graph created by a contraction we assume that $A$ contains the information 
on how the contracted graph was created. Thus if a variable $B$ is graph created by contracting a subgraph
$H$ in $A$, then for a component $X$ of $H$ we may \emph{uncontract $X$ in $B$}, 
that is to set $B:=B/(E(H)-E(X))$. We denote this operation $B:=B \text{ uncontract } X$.
For variables that contain chains it will be convenient te take a relaxed stance to the range of the chain.
If $A$ is a variable that is a chain on $G$, and $H$ is a subgraph of $G$, 
then we consider $A$ to be also a chain on $H$ and on $G/H$.

Let $G$ be a bridgeless graph and let $F$ be a $2$-factor of $G$ such that $G/F$ is $5$-odd-edge-connected. 
Let $>_1$ be a complete ordering of the circuits in $\C^I_G(F)$ and let 
$s$ be an $S$-assigning function.
A flow $\phi$ is \emph{good with respect to $F$, $>_1$, and $s$} if it satisfies the following five properties:

\medskip

\noindent \emph{Property 1}: $\phi$ has zeroes only on edges of $F$.

\medskip

\noindent \emph{Property 2}: For each circuit $C$ of $F$, $|Z(\phi)\cap E(C)|\le |E(C)|/4$ and 
if $C$ has length $4$, $8$, $12$, or $16$, then the strict inequality holds.

\medskip

\noindent \emph{Property 3}: For each circuit $C\in \C^U_G(F,>_1)$ $Z(\phi) \cap E(C) = \emptyset$.

\medskip

\noindent \emph{Property 4}: For each $6$-circuit $C$ of $F$ 
\begin{itemize}
\item[\emph{4a}:] $\partial_G(C) \cap \phi^{-1}(x) = \emptyset$,  for some $x\in\{\cR, \cG, \cB\}$ or
\item[\emph{4b}:] $|E(C) \cap \phi^{-1}(x)| \equiv |E(C) \cap Z(\phi)| \ \  (\bmod \ 2)$, for all $x\in\{\cR, \cG, \cB\}$.
\end{itemize}

\medskip

\noindent \emph{Property 5}: For each subgraph $T=s(\{C, C^*\})$, where $\{C,C^*\}\in{\cal P}_G(F,>_1)$, $|E(T) \cap Z(\phi)| \le 1$ and
\begin{itemize}
\item[\emph{5a}:] $\partial_G(T) \cap \phi^{-1}(x) = \emptyset$,  for some $x\in\{\cR, \cG, \cB\}$ or
\item[\emph{5b}:] $E(T) \cap Z(\phi) = \emptyset$.
\end{itemize}

\medskip

\noindent If $F$, $>_1$, and $s$ are clear from the context, then we simply say that $\phi$ is good.

\begin{lemma}\label{lflow}
Let $G$ be a bridgeless cubic graph and let $F$ be a $2$-factor of $G$ such that $G/F$ is $5$-odd-edge-connected. 
Let $>_1$ be an ordering of the circuits in $\C^I_G(F)$ and let $s$ be an $S$-assigning function.
Then there exists a flow $\phi$ on $G$ that is good with respect to $F$, $>_1$ and $s$. 
\end{lemma}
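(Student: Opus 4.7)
The plan is to contract a well-chosen subgraph $H$ of $G$, produce a nowhere-zero $(\Z_2\times\Z_2)$-flow on the quotient $G/H$, lift it to an $H$-extensible chain on $G$ that vanishes on $E(H)$, and then uncontract the components of $H$ one at a time, using the lemmas of Section~2 to secure the required properties on the component currently being uncontracted while preserving what has already been achieved.

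\textbf{Setup and the starting flow.} Take $H\subseteq G$ with $E(H)=E(F)\cup\bigcup_{\{C,C^*\}\in{\cal P}_G(F,>_1)}(E(s(\{C,C^*\}))\setminus E(F))$. Its connected components are exactly (i)~the circuits of $F$ not contained in any paired subgraph, and (ii)~the paired subgraphs $T=s(\{C,C^*\})$ themselves. Set $G_0=G/H$; since $G_0$ is a further contraction of $G/F$, every edge cut of $G_0$ corresponds to an edge cut of $G/F$ of the same size, so $G_0$ inherits the absence of odd edge cuts of size $1$ or $3$. Invoking the fact that a multigraph without an odd edge cut of size $\le 3$ admits a nowhere-zero $(\Z_2\times\Z_2)$-flow produces $\phi_0$ on $G_0$; reading $\phi_0$ as $0$ on $E(H)$ gives an $H$-extensible chain $\phi$ on $G$ that is nowhere-zero on $E(G)-E(H)$.

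\textbf{Sequential uncontraction.} Next, uncontract the components of $H$ one at a time in the order: (a)~each $C\in\C^U_G(F,>_1)$ in \emph{decreasing} $>_2$ order; (b)~each paired subgraph $T$; (c)~each $6$-circuit of $F$; (d)~each $C\in\C^N_G(F)$; (e)~all remaining circuits of $F$. For~(a), Lemma~\ref{cl3u} supplies a companion circuit $C^*$ of $F$ with at least two edges in $\partial_G(C)\cap\partial_G(C^*)$; the ordering forces $C^*$'s component of $H$ to be still contracted, so in the current graph the contracted vertex representing it is adjacent to at least two (necessarily distinct, since each vertex of $V(C)$ has a unique incident non-$F$ edge) vertices of $V(C)$, and Lemma~\ref{l5u} modifies and extends $\phi$ to $E(C)$ with $Z(\phi)\cap E(C)=\emptyset$, yielding Property~3. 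For~(b), Lemma~\ref{l10} puts $\phi(T,o)$ into one of the eight configurations of Figure~\ref{fig10}; Lemma~\ref{extend} then completes $\phi$ on $E(T)$, and for each configuration one verifies by inspection that a completion exists in which the at-most-one zero lies on an $F$-edge of $T$, giving Property~5 while respecting Property~1. Step~(c) is analogous, using Lemma~\ref{l6} together with Lemma~\ref{extend} to obtain Property~4. For~(d) and~(e), Lemma~\ref{lfan}(2) (or Lemma~\ref{lfan}(1) when $|E(C)|\ge 20$) secures Property~2, including the strict inequalities at lengths $4,8,12,16$. All modifications along the way are performed via Lemmas~\ref{mod1} and~\ref{mod2}, which preserve $H'$-extensibility with respect to the current set $H'$ of still-contracted components, so properties already established persist.

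\textbf{Main obstacles.} The principal obstacle is the very first step: producing a nowhere-zero $(\Z_2\times\Z_2)$-flow on $G_0$. The $5$-odd-edge-connectivity of $G/F$ is precisely the hypothesis tailored to enable this, and one must cite or prove the corresponding $4$-flow result for multigraphs without odd edge cuts of size $\le 3$. The remaining technical issue is in~(b) and~(c), where one must check that the single admissible zero can always be steered onto an $F$-edge rather than onto one of the two edges in $E(T)\setminus E(F)$ (or onto an unsuitable edge of the $6$-circuit); this reduces to a finite case analysis on the configurations output by Lemmas~\ref{l10} and~\ref{l6}, with each case dispatched by an explicit completion.
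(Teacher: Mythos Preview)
Your overall architecture---contract, obtain a nowhere-zero $(\Z_2\times\Z_2)$-flow on the quotient, then uncontract components one by one using the lemmas of Section~2---matches the paper's, and your treatment of the starting flow and of step~(a) is correct. The gap is in the last sentence of your middle paragraph: you assert that ``properties already established persist'' because the modifications preserve $H'$-extensibility. That is not enough. An $H$-modification only preserves the \emph{zero set} outside $E(H)$; it can and does change the \emph{colour values} along Kempe-chain paths (Lemma~\ref{mod2}). Properties~3, 2, and 5b depend only on the zero set and do survive, but Properties~4a, 4b, and 5a depend on colours, and these are exactly the properties you need when the completion on a $6$-circuit or on a paired subgraph $T$ is forced to use a zero. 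Concretely, when $\phi(T,o)=\text{RRRRRR}$ one checks (solving the linear system on the $5$-circuit $C^*$ whose two $w$-neighbours are at distance~$2$) that \emph{no} nowhere-zero extension to $E(C^*)$ exists, so 5b is unattainable and you must rely on 5a. But then applying Lemma~\ref{l10} to a later pair $T'$, or Lemma~\ref{l6} to a $6$-circuit, or Lemma~\ref{lfan}(2) to a short circuit, can recolour edges of $\partial_G(T)$ and destroy 5a.

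The paper resolves this with an extra device that your outline omits: after applying Lemma~\ref{l6} or Lemma~\ref{l10} to a $6$-circuit $C$ (resp.\ a paired subgraph $T$), it immediately \emph{replaces} $C$ (resp.\ $T$) by three surrogate edges, one for each equality $a_i=a_j$ in the attained condition, carrying the common value. Any subsequent modification that touches such a surrogate edge changes $a_i$ and $a_j$ simultaneously, so the equality pattern is frozen. Only after \emph{all} modifications have been performed (for all $6$-circuits and all pairs) does the paper remove the surrogates and fill in the actual edges explicitly, using extensions (Lemma~\ref{lemma5c}, Lemma~\ref{lfan}(1)) rather than modifications. This ``freeze, then fill'' step is the missing idea in your plan; without it the order in which you process the components in (b)--(e) cannot be arranged to protect Properties~4 and~5.
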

\begin{proof}
Let $>_2$ be the total strict order whose existence is guaranteed by Lemma~\ref{cl3u}.
We construct $\phi$ using the following procedure that uses variable $H$ that is a graph and
variable $\phi'$ that is a chain. If we say that a chain satisfies Property~1, 2, 3, 4, or 5 we mean the respective property from the definition of good flow on $G$. To say this we do not require $\phi'$ 
to be a flow, nor to be defined on whole $G$. It suffices that the chain is defined on the part of $G$
where the property applies.

\begin{enumerate}
\item We set $H:=G/F$, and $\phi'$ to an arbitrary nowhere-zero flow on $H$ (such a flow exists \cite[Proposition 10]{jaeger}). Now $\phi'$ satisfies Property~1 and this remains true till the end of the procedure.

\item For each circuit $C \in \C^U_G(F,>_1)$ and in descending order with $>_2$ we perform the following subprocedure. At the start of each subprocedure $\phi'$ is a nowhere-zero flow on $H$. 
\begin{enumerate}
\item $H:=H \text{ uncontract } C$, $\phi'(e):=0$ for each $e \in E(C)$. Now $\phi'$ is a $C$-extensible chain
on $H$.
\item By Lemma~\ref{cl3u}, we may use Lemma~\ref{l5u} and by Lemma~\ref{l5u}
we can make a $C$-modification of $\phi'$ and obtain a nowhere-zero flow on $H$. We set $\phi'$ to be this flow.
\end{enumerate}

Now $\phi'$ satisfies Property~3 and this remains true until the end of the procedure.

\item For each circuit $C$ that is not of length $6$ and is not in ${\cal C}^P_G(F, >_1)$ we perform the following subprocedure. 
At the start of each subprocedure $\phi'$ is a flow on $H$.
\begin{enumerate}
\item $H:=H \text{ uncontract } C$, $\phi'(e):=0$ for each $e \in E(C)$. Now $\phi'$ is a $C$-extensible chain
on $H$.
\item By Lemma~\ref{lfan}, we can make a $C$-modification of $\phi'$ and obtain a flow on $H$ with small (i.e.~as required by Property~2) number of zero values in $E(C)$. We set $\phi'$ to be this flow.
\end{enumerate}

Now $\phi'$ satisfies Property~2 and this remains true until the end of the procedure.

\item For each $6$-circuit $C$ of $F$ we perform the following subprocedure.
At the start of each subprocedure $\phi'$ is a flow on $H$. 
\begin{enumerate}
\item $H:=H \text{ uncontract } C$, $\phi'(e):=0$ for each $e \in E(C)$. Now $\phi'$ is a $C$-extensible chain
on $H$.
\item Let us denote the vertices of $C$ by $v^C_0$, $v^C_1$, \dots, $v^C_5$ in a consecutive order and consider 
the $H$-boundary-ordering given by the vertex sequence $v^C_0$, $v^C_1$, \dots, $v^C_5$.
By Lemma~\ref{l6}, there exists a $C$-modification of $\phi'$ such that one of 
the conditions from Figure~\ref{fig6} is satisfied. We set $\phi'$ to be this $C$-modification.

\item We delete the edges of $C$ and add three edges according to the satisfied condition (if more than one condition is satisfied, we choose one of the conditions arbitrarily): 
for all three equalities $a_i=a_j$, where $i<j$, the satisfied condition requires,
we add an edge $e^C_i$ between $v^C_i$ and $v^C_j$ to $H$ and set $\phi'(e^C_i):=a_i$.
After we do this for all three equalities, $\phi'$ is a flow on $H$.
Thanks to this substitution the satisfied condition from Figure~\ref{fig6} remains satisfied till the end of the procedure for $C$.
\end{enumerate}

\item For each $\{C, C^*\}\in {\cal P}_G(F, >_1)$ we perform the following subprocedure.
Let $T=s(\{C, C^*\})$.
At the start of each subprocedure $\phi'$ is a flow on $H$. 
\begin{enumerate}
\item $H:=(H \text{ uncontract } C)$, $H:=(H \text{ uncontract } C^*)$, and
$\phi'(e):=0$ for each $e \in E(C) \cup E(C^*)$. 
Now $\phi'$ is a $T$-extensible chain on $H$.

\item Let us denote the vertices of $T$ by $v^T_i$, for $i\in \{0, \dots, 5\}$, 
and $w^T_i$, for $i\in \{0, \dots, 3\}$, so that there is an isomorphism between $S$ (from Figure~\ref{fig10}) and 
$T$ such that $v^T_i$ is the isomorphic image of $v_i$, for $i\in \{0, \dots, 5\}$, 
and $w^t_i$ is the isomorphic image of $w_i$, for $i\in \{0, \dots, 3\}$.
Consider the $H$-boundary-ordering given by the vertex sequence $v^C_0$, $v^C_1$, \dots, $v^C_5$.
By Lemma~\ref{l6}, there exists an $T$-modification of $\phi'$ such that one of 
the conditions from Figure~\ref{fig10} is satisfied. We set $\phi'$ to be this $T$-modification.

\item We delete the edges of $T$ and add three edges according to the satisfied condition (if more than one condition is satisfied, we choose one of the conditions arbitrarily): 
For all three equalities $a_i=a_j$, where $i<j$, the satisfied condition requires,
we add an edge $e_i^T$ between $v^T_i$ and $v^T_j$ to $H$ and set $\phi'(e^T_i):=a_i$.
After we do this for all three equalities $\phi'$ is a flow on $H$.
Thanks to this substitution the satisfied condition from Figure~\ref{fig10} remains satisfied till the end of the procedure for $\{C, C^*\}$.
\end{enumerate}

From now on we stop modifying the flow values on edges currently present in the graph $H$. 

\item For each $6$-circuit $C$ of $F$ we perform the following subprocedure.
At the start of each subprocedure $\phi'$ is a flow on $H$. 
\begin{enumerate}
\item For all three equalities $a_i=a_j$, where $i<j$, of the fixed condition satisfied for $C$
we remove the edge $e_i^C$ and add the edges from $E(C)$ into $H$. 
We set $\phi'(e):=0$ for all $e\in E(C)$.
Now $\phi'$ is a $C$-extensible chain such that
one of the conditions from Figure~\ref{fig6} is satisfied.

\item We change $\phi'$ into a flow by modifying flow values of the edges from $E(C)$.
As $C$ has no multiedges, 
we denote the edges of $C$ here by two vertices the edge is incident with.
We may without loss of generality assume that the satisfied condition is either Condition 1, 3, or 6.
If for some $x\in\{\cR, \cG, \cB\}$ $\partial_G(C) \cap \phi'^{-1}(x) = \emptyset$, 
then we get a flow such that $|E(C)\cap Z| \le 1$ using statement one of Lemma~\ref{lfan}.
Thus we may assume that all three non-zero flow values are present on $\partial(C)$ in $\phi'$.
Assume Condition 1 is satisfied.
we may without loss of generality assume, that $a_0=a_1=\cR$, $a_2=a_3=\cG$, and $a_4=a_5=\cB$.
We set the flow values in $\phi'$ of $v_0v_1$, $v_1v_2$, $v_2v_3$, $v_3v_4$, $v_4v_5$, and $v_5v_0$ to 
B, G, $0$, G, R, and G, respectively.
Assume Condition 3 is satisfied.
we may without loss of generality assume, that $a_0=a_3=\cR$, $a_1=a_2=\cG$, and $a_4=a_5=\cB$.
We set the flow values in $\phi'$ of $v_0v_1$, $v_1v_2$, $v_2v_3$, $v_3v_4$, $v_4v_5$, and $v_5v_0$ to 
B, R, B, G, R, and G , respectively.
Assume Condition 6 is satisfied.
we may without loss of generality assume, that $a_0=a_3=\cR$, $a_1=a_4=\cG$, and $a_2=a_5=\cB$.
We set the flow values in $\phi'$ of $v_0v_1$, $v_1v_2$, $v_2v_3$, $v_3v_4$, $v_4v_5$, and $v_5v_0$ to 
B, R, G, B, R, and G, respectively. 
In each case Property~4 is satisfied for $C$ and this remains the case until the end of the procedure.
\end{enumerate}

Now $\phi'$ satisfies Property~4 and this remains true until the end of the procedure.

\item For each $\{C, C^*\}\in {\cal P}_G(F, >_1)$ we perform the following subprocedure.
Let $T=s(\{\C, C^*\})$.
At the start of each subprocedure $\phi'$ is a flow on $H$. 
\begin{enumerate}
\item For all three equalities $a_i=a_j$, where $i<j$, of the fixed condition satisfied for $T$
we remove the edge $e_i^T$ and add the edges from $E(T)$ into $H$. 
We set $\phi'(e):=0$ for all $e\in E(T)$.
Now $\phi'$ is a $T$-extensible chain
such that one of the conditions from Figure~\ref{fig10} is satisfied.

We may without loss of generality assume that the satisfied condition is either Condition 1, 3, 5, or 7.

\item
The following tabular contains, up to the isomorphism of $H$ and the isomorphisms of $\Z_2 \times \Z_2$,
all possible flow values on $\partial_G(T)$. 
we start enumeratinf the flow values by starting with flow values satisfying Condition~1,
then we continue with flow values satisfying Condition~3,~5, and~7.
We set the flow values of flow on $w^T_0w^T_1$ and $w^T_2w^T_3$
to the values indicated in the tabular. 
\begin{center}
\begin{tabular}{|c|c|c|c|| c |}
\hline
No.&R&G&B & flow on $w^T_0w^T_1$ and $w^T_2w^T_3$\\
\hline
1, 3, 5, 7&$a_0, \dots, a_5$ &&& G, B\\
1&$a_0, a_1, a_2, a_3$ &  $a_4, a_5$ &&G, B\\
1, 7&$a_0, a_1, a_4, a_5$ & $a_2, a_3$ && B, R\\
1, 3&$a_2, a_3, a_4, a_5$ & $a_0, a_1$ && B, G\\
1&$a_0, a_1$ & $a_2, a_3$ & $a_4,a_5$ & \bf G, B\\

3&$a_0, a_1, a_2, a_4$ & $a_3, a_5$ && G, B\\
3, 5&$a_0, a_1, a_3,a_5$ & $a_2, a_4$ && B, G\\
3&$a_0, a_1$ & $a_2, a_4$ & $a_3, a_5$ & \bf B, R\\

5, 7&$a_0, a_2, a_3, a_4$ & $a_1, a_5$ && B, R\\
5&$a_1, a_2, a_4, a_5$ & $a_0, a_3$ && R, B\\
5&$a_0, a_3$ & $a_1, a_5$ & $a_2, a_4$ & \bf B, B\\

7&$a_1, a_2 a_3, a_5$ & $a_0, a_4$ && R, B\\
7&$a_0, a_4$ & $a_1, a_5$ & $a_2, a_3$& \bf R, R\\ 
\hline
\end{tabular}
\end{center}

Without loss of generality assume that $C$ contains $v_0$ and $C^*$ contains $v_3$.

\item $H:=H/C^*$

\item Lemma~\ref{lemma5c} allows us to extend the $H$-extensible chain $\phi'$ 
into a flow without introducing zero values. We set $\phi'$ to be this flow.

\item $H:=H \text{ uncontract }C$.

\item 
If $\partial(T)$ contains all non-zero flow values, then we use Lemma~\ref{lemma5c} to extend the $C^*$-extensible chain $\phi '$ into a flow without introducing zero values (these cases are distinguished by bold flow values in the last column of the tabular above.)
Otherwise, use the first statement of Lemma~\ref{lfan} to extend the chain $\phi'$ without introducing more than one zero.
In both cases Property~5 is satisfied for $S$ and this remains the case until the end of the procedure.
\end{enumerate}

Now $\phi'$ satisfies Property~5. It is a flow and it satisfies Properties~1--4 as well. Thus $\phi'$ is good.
\end{enumerate}
Now let $\phi$ denote the flow $\phi'$ with range reduced to $E(G)$. As $\phi'$ was good, $\phi$ is good.
\end{proof}

\section{The cover}

Let $F$ be a $2$-factor of $G$ such that $G/F$ is $5$-odd-edge-connected,
let $>_1$ be an ordering of the circuits in $\C^I_G(F)$ and let $\phi$ be a flow on $G$.
We assign the circuits of $F$ into \emph{types} as follows. 
Circuit $C$ of $F$ is of
\begin{itemize}
\item \emph{Type t}, for $t\in \mathbb{N}-\{1, 3, 5\}$, if it has length $t$;
\item \emph{Type $N$}, if it is $\C_G^N(F)$;
\item \emph{Type $U$}, if it is from $\C_G^U(F,>_1)$;
\item \emph{Type $P_1$}, if it is from $\C_G^P(F,>_1)$ and $Z(\phi) \cap E(C))=\emptyset$;
\item \emph{Type $P_2$}: if it is from $\C_G^P(F,>_1)$ and $Z(\phi) \cap E(C)) \neq \emptyset$.
\end{itemize}
Note that as $G/F$ is $5$-odd-edge-connected, we have no circuits of length $1$ or $3$ in $F$
thus each circuit of $F$ has a type uniquely defined. Let ${\cal T}=(\mathbb{N} - \{1, 3, 5\}) \cup \{N, U, P_1, P_2\}$. 
For an circuit type $t\in {\cal T}$ we define $d_t(F, >_1, \phi)$ as the number of circuits of Type $t$ in $F$,
and $n_t$ as the number of vertices in one circuit of Type $t$
(circuits of a given type contain the same number of vertices).

\medskip

In the following two lemmas we use the same proof structure.
We define a cycle cover of $G$ based on a $2$-factor $F$, ordering $>_1$, function $s$, and a flow $\phi$
that is good with respect to $F$, $>_1$, and $s$. 
We bound the cover length of both covers using a discharging argument. 
In the \emph{starting charge} the edges of $G$ have charge according 
to how many times they are used in the cover. 
Then we define \emph{discharging rules} that may not decrease the charge.
After applying all the discharging rules we get the \emph{resulting charge}, 
where all the charge is assigned to the circuits of $F$.
For each circuit type we bound the resulting charge on a circuit of that type.  
This gives us an lower bound on the length of the cycle cover depending 
on the numbers and types of circuits in $F$.

\begin{lemma}\label{lemmacov1}
Let $G$ be a bridgeless cubic graph.
Let $F$ be a $2$-factor of $G$ such that $G/F$ is $5$-odd-edge-connected,
and let $>_1$ be an ordering of the circuits in $\C_G^I(F)$.
Let $s$ be an arbitrary $S$-assigning function and 
let $\phi$ be a flow on $G$ that is good with respect to $F$, $>_1$, and $s$.
Then there exists a cycle cover of $G$ of length at most
\begin{eqnarray*}
\sum_{t\in{\cal T}} a_t \cdot d_t(F, >_1, \phi),
\end{eqnarray*}
where the coefficients $a_t$ are given in Figure~\ref{figCovers}.
\end{lemma}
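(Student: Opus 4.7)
The plan is to build the cover from $\phi$ in the standard way for this line of work. For each nonzero $x \in \Z_2 \times \Z_2$ let $\Gamma_x^{(0)}$ be the set of edges whose $\phi$-value is nonzero and different from $x$; as the nonzero support of the coordinate projection of $\phi$ orthogonal to $x$ (a $\Z_2$-flow), $\Gamma_x^{(0)}$ is a cycle, and the three cycles $\Gamma_{\cR}^{(0)}, \Gamma_{\cG}^{(0)}, \Gamma_{\cB}^{(0)}$ cover each nonzero edge exactly twice and no zero edge. Since $Z(\phi) \subseteq E(F)$ by Property~1, I will assign a color $x_C \in \{\cR, \cG, \cB\}$ to each circuit $C$ of $F$ and set $\Gamma_x := \Gamma_x^{(0)} \triangle \bigcup_{C:\, x_C=x} E(C)$. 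Each $\Gamma_x$ is a cycle (symmetric difference of cycles is a cycle), and a direct case check shows every edge of $G$ lies in some $\Gamma_x$: each zero edge of $C$ is picked up by $\Gamma_{x_C}$, and each nonzero edge stays in at least one cycle. A direct count gives the cover length
\begin{equation*}
|\Gamma_{\cR}| + |\Gamma_{\cG}| + |\Gamma_{\cB}| \;=\; \frac{4m}{3} + 2 \sum_{C} n_{x_C}(C),
\end{equation*}
where $n_y(C) := |E(C) \cap \phi^{-1}(y)|$ and $C$ ranges over the circuits of $F$.

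To bound this by $\sum_t a_t d_t$ I set up the discharging scheme suggested before the lemma. Each edge starts with charge equal to its multiplicity in the cover (so the total starting charge equals the cover length), each matching edge of $M := E(G) - E(F)$ sends unit charge to each of the two circuits of $F$ meeting its endvertices, and each edge of $F$ keeps its charge on the unique $F$-circuit containing it. Charge is preserved and ends up entirely on the circuits of $F$, and a short calculation shows that the charge accumulated on a circuit $C$ equals $2|E(C)| + 2 n_{x_C}(C)$. The proof reduces to choosing $x_C$ type by type so that this quantity is at most the constant $a_{t(C)}$ of Figure~\ref{figCovers}. For types in $\cal T$ outside $\{6, N, U, P_1, P_2\}$ the choice minimizing $n_{x_C}(C)$ together with pigeonhole and Property~2 does the job; type $U$ is immediate from Property~3 ($n_0(C)=0$), and type $N$ from Property~2 ($n_0(C)\le 1$).

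The main obstacle is the coordinated choice of $x_C$ needed for $6$-circuits and for the paired pairs $\{C,C^*\}\in{\cal P}_G(F,>_1)$, where pigeonhole alone is insufficient. For a $6$-circuit, Property~4 forces one of two favorable local configurations of $\phi$: either some color is absent from $\partial(C)$ (case~4a), which is the ideal value of $x_C$ because it also saves edges on the neighboring circuits, or the parities of the color-counts on $E(C)$ match that of $|E(C)\cap Z(\phi)|$ (case~4b), forcing a small minimum color count; the case distinction tracks the six alternatives of Figure~\ref{fig6}. For a paired pair, Property~5 plays the analogous role on the subgraph $T = s(\{C,C^*\})$ from Lemma~\ref{cl3p}: the combination of $|E(T)\cap Z(\phi)| \le 1$ with either an absent color on $\partial(T)$ (5a) or no zeros in $T$ at all (5b) permits a coordinated choice $x_C = x_{C^*}$ such that one common color simultaneously absorbs the (at most one) zero of $T$ while keeping both $n_{x_C}(C)$ and $n_{x_{C^*}}(C^*)$ sufficiently small. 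Verifying the charge bound $a_{P_1}$, $a_{P_2}$ by a case analysis following the eight alternatives of Figure~\ref{fig10} — during which the symmetric differences on $C$ and on $C^*$ interact through the extra $5$-circuit of $T$ — will be the heaviest step of the argument.
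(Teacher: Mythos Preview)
Your construction has a genuine gap. By taking the symmetric difference of each circuit $C$ with only one cycle $\Gamma_{x_C}$ (for a single nonzero colour $x_C$), the resulting charge on $C$ is $2|E(C)| + 2n_{x_C}(C)$, and you then need $\min_{x\in\{\cR,\cG,\cB\}} n_x(C) \le \lfloor |E(C)|/4 \rfloor$. But Property~2 of a good flow bounds $n_0(C)=|Z(\phi)\cap E(C)|$, not the minimum nonzero colour count, and pigeonhole over the three nonzero colours only gives $\min_x n_x(C)\le (|E(C)|-n_0(C))/3$, which can exceed $\lfloor |E(C)|/4\rfloor$. Concretely, a $4$-circuit with edge values $\cR,\cG,\cR,\cB$ (boundary $\cB,\cB,\cG,\cG$) is a valid flow with $n_0=0$ but $\min_x n_x=1$, so your charge is $10>a_4=8$; a $7$-circuit with edge values $0,\cR,\cG,\cB,\cR,\cG,\cB$ has $n_0=1$ (allowed by Property~2) and $n_\cR=n_\cG=n_\cB=2$, giving charge $18>a_7=16$. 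So the claim ``pigeonhole and Property~2 does the job'' fails already for the generic types.

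The paper's cover avoids this by taking the symmetric difference of \emph{all} of $F$ with \emph{all three} cycles simultaneously: $C_i=\Gamma^{(0)}_x\triangle F$ for each nonzero $x$. Then an edge of $F$ with value $0$ lies in all three cycles and every other edge of $F$ lies in exactly one, so the charge on a circuit $C$ becomes $2|E(C)|+2n_0(C)$, and now Property~2 applies directly. With this construction the lemma is immediate: the only refinements over the generic bound $2i+2\lfloor i/4\rfloor$ are the strict inequality of Property~2 for $i\in\{4,8,12,16\}$, Property~3 for Type~$U$, and the defining condition $Z(\phi)\cap E(C)=\emptyset$ for Type~$P_1$. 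In particular, Properties~4 and~5 are \emph{not} used in this lemma at all---the case analyses for $6$-circuits and for paired $5$-circuits that you anticipate as the ``heaviest step'' belong to the companion Lemma~\ref{lemmacov2}, not here.
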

\begin{proof}
We define a cover that consists of three cycles. Each cycle is defined by selecting two values from $\Z_2\times Z_2$ for edges outside $F$ and two values from $\Z_2\times Z_2$ for edges in $F$. 
The edges having these values in $\phi$ are in the respective cycle.
\begin{center}
\begin{tabular}{c|c|c}
Cycle &$E(G)-E(F)$ & $E(F)$ \\
\hline
$C_1$&R, G & B, $0$\\
$C_2$&R, B & G, $0$\\
$C_3$&G, B & R, $0$
\end{tabular}
\end{center}
First, note that as $\phi$ is good (Property~1), the defined subgraphs cover all edges of $G$.
Second, the defined subgraphs are cycles as $C_i$, for $i\in\{1, 2, 3\}$, 
is the symmetric difference of the subgraph containing R and G edges for $C_1$, R and B edges for $C_2$, and
G and B edges for $C_3$ (which are cycles by Lemma~\ref{parity}) and $F$.

The discharging rules are the following.
The edges in $F$ send their charge to the circuit of $F$ they are contained in.
The edges outside $F$ send half of their charge to the circuit of $F$ that contains one of its endvertices
and  half of the charge to the circuit of $F$ that contains the second endvertex.

We bound the resulting charge of the circuits of $F$. 
For circuits $C$ of of length $i$ we bound the resulting charge of $C$ as follows.
The edges in $C$ are used once in the cover, except for the edges in $Z(\phi)$ which are used three times.
As $\phi$ is good (Property 2), at most quarter of the edges of $C$ are in in $Z(\phi)$.
Thus the charge sent to $C$ by the edges of $C$ is at most $i+2\lfloor i/4 \rfloor$.
Each edge in $\partial_G(C)$ is contained twice in the cover and it sends the charge $1$ to $C$ 
(if an edge is twice in $\partial_G(C)$ it sends the charge $1$ twice to $C$).
Altogether, the charge sent to $C$ is at most $2i+2\lfloor i/4 \rfloor$. 
If $C$ is of one of the following types, we can obtain an improved bound on the resulting charge of $C$:
\begin{itemize}
\item \underline{Type $4$, $8$, and $12$}: 
As $\phi$ is good (Property 2), for a circuit $C$ of this type $|E(C) \cap Z(\phi)<|E(C)/4|$. 
Thus the resulting charge of $C$ is at most $8$, $18$, and $28$, respectively.
\item \underline{Type $U$}: 
As $\phi$ is good (Property 3), for a circuit $C$ of this type 
$E(C) \cap Z(\phi)=\emptyset$. Thus resulting charge of $C$ is $10$.
\item \underline{Type $P_1$}: By definition of Type $P_1$, 
for a circuit $C$ of this type 
$E(C) \cap Z(\phi)=\emptyset$. Thus the resulting charge of $C$ is $10$.
\end{itemize}
From the above bounds we obtain that the defined cover satisfies the conditions of this lemma.
\end{proof}

\begin{figure}
\begin{center}
\begin{tabular}{c|c|c|c|c}
Type $t$ & $a_t$ & $b_t$ & $c_t = (a_t+2b_t)/2$ & $c_t/n_t$\\
\hline
Type $2$ & $4$ & $5$ & $4+2/3$ & $2+1/3$\\
Type $4$ & $8$ & $10$ & $9+1/3$ & $2+1/3$\\
Type $N$  & $12$ & $12+1/2$ & $12+1/3$ & $2+7/15$\\
Type $U$ & $10$ & $12+1/2$ & $11+2/3$ & $2+1/3$ \\
Type $P_1$ & $10$ & $12+1/2$ & $11+2/3$ & $2+1/3$\\
Type $P_2$ & $12$ & $11+1/2$ & $11+2/3$ & $2+1/3$\\
Type $6$ & $14$ & $14$ & $14$ & $2+1/3$\\
Type $7$ & $16$ & $16.5$ & $16+1/3$ & $2+1/3$\\
Type $8$ & $18$ & $19$ & $18+2/3$ & $2+1/3$\\
Type $9$ & $22$ & $20.5$ & $21$ & $2+1/3$\\
Type $10$ & $24$ & $23$ & $23+1/3$ & $2+1/3$\\
Type $11$ & $26$ & $24.5$ & $25$ & $<2+1/3$\\
Type $12$ & $28$ & $27$ & $27+1/3$ & $<2+1/3$\\
Type $i$, $i>12$ & $2i+2\lfloor i/4 \rfloor$ & $3/2\cdot i+\lfloor i/2 \rfloor +3$ & 
$(a_t+2b_t)/3$ & $<2+1/3$
\end{tabular}
\caption{The coefficients from the bounds from Lemma~\ref{lemmacov1}, ~\ref{lemmacov2}, ~\ref{lemmacov3}, and \ref{lemmamain}} \label{figCovers}
\end{center}
\end{figure}

\begin{lemma}\label{lemmacov2}
Let $G$ be a bridgeless cubic graph.
Let $F$ be a $2$-factor of $G$ such that $G/F$ is $5$-odd-edge-connected,
and let $>_1$ be an ordering of the circuits in $\C^I_G(F)$.
Let $s$ be an arbitrary $S$-assigning function and 
let $\phi$ be a flow on $G$ that is good with respect to $F$, $>_1$, and $s$.
Then there exists a cycle cover of $G$ of length at most
\begin{eqnarray*}
\sum_{t\in{\cal T}} b_t \cdot d_t(F, >_1, \phi),
\end{eqnarray*}
where the coefficients $b_t$ are given in Figure~\ref{figCovers}.
\end{lemma}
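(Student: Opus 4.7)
The plan is to mirror the proof of Lemma~\ref{lemmacov1}. I will pick three cycles whose multiset union covers $E(G)$, apply the same discharging rules (each $F$-edge sends its full charge to its $F$-circuit; each boundary edge sends half its charge to each of the two incident $F$-circuits), and then bound the resulting charge on every circuit by $b_t$ according to its type. The only thing that changes is the choice of the three cycles. Whereas Lemma~\ref{lemmacov1} used three cycles of the shape $(\phi^{-1}(x)\cup\phi^{-1}(y))\oplus E(F)$, forcing every outside edge to be used twice and every $0$-edge three times, I will use a cover in which one of the three cycles is $F$ itself and the other two are cycles built from the flow preimages $\phi^{-1}(\cR)$, $\phi^{-1}(\cG)$, $\phi^{-1}(\cB)$, possibly combined (via $\oplus$) with unions of $F$-circuits in order to optimise the local charge on each circuit. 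Under Property~1 the only zeros lie inside $F$, so in this new cover every $0$-edge is covered only once (by $F$). This is precisely what will produce $b_{P_2}<b_{P_1}$ — a zero in a paired $5$-circuit now decreases, rather than increases, the charge — and the generally smaller coefficients on long circuits.

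After fixing the cover and verifying coverage (every outside edge is non-zero by Property~1 and hence lies in at least one of the two flow cycles), the discharging gives a baseline per-circuit bound of roughly $5i/2$ for a length-$i$ circuit; this already matches $b_t$ for $i\in\{2,4\}$ and for Type~$N$, and is loose for the intermediate lengths. I will close the gap type by type. For Types~$U$ and~$P_1$ the saving is free because $Z(\phi)\cap E(C)=\emptyset$. For Types~$4,8,12$ the strict form of Property~2 supplies it. For Type~$N$, Property~2 bounds $|Z(\phi)\cap E(C)|$ by $1$ and the cover's cheaper handling of zeros yields $b_N=12.5$. For Type~$6$, Property~4 pins $\phi(C,o)$ to one of the patterns in Figure~\ref{fig6}, which I will use to save one internal-edge usage and obtain $b_6=14$. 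For lengths $i\ge 7$ a parity count based on Lemma~\ref{parity} applied to $\partial(C)$, combined with Property~2, gives the formula $(3/2)i+\lfloor i/2\rfloor+3$.

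The main technical obstacle will be Types~$P_1$ and~$P_2$. A paired $5$-circuit $C$ sits inside the subgraph $T=s(\{C,C^*\})$ of Figure~\ref{fig10}, and Property~5 constrains only the joint structure of the flow on $E(T)\cup\partial(T)$, so I plan to analyse the pair $\{C,C^*\}$ simultaneously, summing their charges and verifying a combined bound of $2b_{P_1}$, $b_{P_1}+b_{P_2}$, or $2b_{P_2}$ according to whether $E(T)\cap Z(\phi)$ is empty or a single edge (it cannot be larger by Property~5). The $1$-unit gain of $P_2$ over $P_1$ will correspond exactly to the single zero edge, covered once in this cover instead of three times in Lemma~\ref{lemmacov1}. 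The case split between conditions~5a and~5b, together with the selection of which two of the three colour-pair cycles are used near the pair, is where most of the bookkeeping will go.
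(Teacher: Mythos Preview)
Your approach has a genuine gap: with $F$ as one of the three cycles and two colour-pair cycles for the other two, the per-circuit charge cannot meet the target $b_i=\tfrac{3}{2}i+\lfloor i/2\rfloor+3$ for large~$i$. On a circuit $C$ of length $i$, the cycle $F$ already contributes $i$ inside, and each of the two optimised flow cycles contributes up to $\lfloor i/2\rfloor$ more, so the inside cost alone is $i+2\lfloor i/2\rfloor\approx 2i$; the boundary then adds at least $i/2$ (and up to $i$, depending on how many boundary edges carry the doubly-used colour). Even in the best case this is $\approx 5i/2$, which exceeds $2i+3$ for all $i\ge 7$. No parity count on $\partial(C)$ can repair this: Lemma~\ref{parity} controls colour counts only modulo~$2$, and Property~2 concerns zeros inside $C$, so neither bounds the number of heavy-colour boundary edges.

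The paper's proof is structurally different. Its three cycles are chosen so that outside $F$ the colours $\cR,\cG,\cB$ are used $3,2,1$ times respectively, while inside each edge is covered once by $C_1\cup C_2$ and at most $\lfloor i/2\rfloor$ times by $C_3$, giving an inside cost of only $i+\lfloor i/2\rfloor$. The excess charge on outside $\cR$- and $\cG$-edges is handled \emph{globally}: the flow is first modified to a flow $\phi_2$ in which the $\cR$-edges (resp.\ $\cG$-edges) form a forest in $G/F$, so there are fewer such edges than circuits, and a global discharging rule deposits at most $2+1=3$ extra charge on each circuit. This is the source of the additive~$+3$. Finally, the bounds for Types $2$, $4$, $6$, and $P_2$ are obtained in expectation after a uniformly random permutation of the nonzero flow values: the saving $b_{P_2}<b_{P_1}$ comes not from zeros being cheap, but from Property~5a guaranteeing a missing colour on $\partial(T)$, which with probability $\ge 1/3$ is $\cR$ (resp.\ $\cG$), exempting the $P_2$ circuit from the corresponding global rule.
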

\begin{proof}
We start by manipulating $\phi$. 
First, we make a random permutation of non-zero elements of $\Z_2 \times \Z_2$ in $\phi$ 
(each permutation is chosen with probability $1/6$). 
We name the resulting flow $\phi_1$. 

We define $\phi_2$ by the following procedure that uses two variables: a chain $\phi'$ and a graph $H$. 
\begin{enumerate}
\item We set $\phi':=\phi_1$ and $H:=G/F$.

\item While changes to $\phi'$ are possible we do the following subprocedure. At the start of each subprocedure $\phi'$ is a flow on $H$. In this subprocedure we modify only the flow values on the edges from $E(G)-E(F)$.

\begin{enumerate}
\item If there is a circuit $C$ of $H$ consisting only of edges $e$ with $\phi'(e)=\cR$ in $H$, 
then we set $\phi'(e):=\cB$ for each $e\in E(C)$ (if there is more than one choice for $C$ we pick $C$ arbitrarily). 
As for each vertex even number of edges change their flow value from R to B, the sum at each vertex remains zero in $H$, thus $\phi'$ remains a flow.

\item If there is a circuit $C$ of $H$  consisting only of edges $e$ with $\phi'(e)=\cG$ in $H$, 
then we set $\phi'(e):=\cB$ for each $e\in E(C)$ (if there is more than one choice for $C$ we pick $C$ arbitrarily). 
As for each vertex even number of edges change their flow value from G to B, the sum at each vertex remains zero in $H$, thus $\phi'$ remains a flow on $H$.
\end{enumerate}

\item For each circuit $C$ of $F$ we do the following subprocedure. At the start of each subprocedure $\phi'$ is a flow on $H$.
In this subprocedure we modify only the flow values on the edges from $E(F)$.

\begin{enumerate}
\item  $H:=H \text{ uncontract } C$ and we set $\phi'(e):=\phi_1(e)$ for all $e\in E(C)$

The chain $\phi'$ may not be a flow on $H$ but it is a $C$-extensible chain. 

\item If $\phi'$ is not a flow (if this is the case then $\phi'(e) \neq \phi_1(e)$ for some $e\in \partial_G(C)$),
then we use Lemma~\ref{extend} to make a $C$-extension of $\phi'$ and we set $\phi'$ to be this extension. 
\end{enumerate}
\end{enumerate}

We name the resulting flow $\phi'$ on $G$ as $\phi_2$.
One can easily verify that $\phi_2$ has the following properties.

\medskip

\noindent {\it Property 1$'$}: For each $e\in E(G)-E(F)$, we have $\phi_2(e)\neq 0$.

\medskip

\noindent {\it Property 2$'$}: We have that $\phi_2^{-1}(\cR) - E(F) \subseteq \phi_1^{-1}(\cR) - E(F)$ and 
$\phi_2^{-1}(\cG) - E(F) \subseteq \phi_1^{-1}(\cG) - E(F)$.

\medskip

\noindent {\it Property 3$'$}: The edges $e$ with $\phi_2(e)=\cR$ form a forest in $G/F$.

\medskip

\noindent {\it Property 4$'$}: The edges $e$ with $\phi_2(e)=\cG$ form a forest in $G/F$.

\medskip

\noindent {\it Property 5$'$}: If for some $C$ $\phi_2(e)=\phi_1(e)$ for each $e\in\partial_G(C)$, then $\phi_2(e)=\phi_1(e)$ for each $e\in E(G)$.

\medskip

We define a cover that consists of three cycles. First two cycles are defined by selecting two values from $\Z_2\times Z_2$ for edges outside $F$ and two values from $\Z_2\times Z_2$ for edges in $F$. 
The edges having these values in $\phi_2$ are in the respective cycle.
The third cycle is defined by selecting two values from $\Z_2\times Z_2$ for edges outside $F$,
the edges having these values in $\phi_2$ are in the cycle, 
and for each circuit of $F$ we separately decide, 
whether we take the edges with flow values R and B in $\phi_2$ or the edges with flow values G and $0$ in $\phi_2$;
for each circuit we choose the possibility that contains fewer edges.

\begin{center}
\begin{tabular}{c|c|c}
Cycle &$E(G)-E(F)$ & $E(F)$ \\
\hline
$C_1$&R, G & B, $0$\\
$C_2$&R, G & R, G\\
$C_3$&R, B & R, B or G, $0$ (chosen for each circuit separately)
\end{tabular}
\end{center}
First, note that due to Property~1$'$ of $\phi_2$, the defined subgraphs cover all edges of $G$.
Second, the defined subgraphs are cycles as $C_i$, for $i\in\{1, 2, 3\}$, 
is the symmetric difference of
the subgraph containing R and G edges, R and G edges, and R and B edges 
(which are cycles by Lemma~\ref{parity}) and $F$, empty graph, and the union of circuits for which we chose
the edges with flow values G and $0$, respectively.

\medskip

The discharging rules are the following.
The edges in $F$ send their charge to the circuit of $F$ they are contained in.
The edges outside $F$ send  charge $1/2$ to circuit the  of $F$ that contains its endvertex,
and charge $1/2$ to the circuit of $F$ that contains the second endvertex.
This leaves charge $2$ on edges $e$ outside $F$ with $\phi_2(e)=\cR$ and 
            charge $1$ on edges $e$ outside $F$ with $\phi_2(e)=\cG$.

Finally, we add two \emph{global discharging rules}. 
As the \emph{first global rule},
we remove charges from the edges $e$ outside $F$ with $\phi_2(e)=\cR$ 
and we add charge $2$ to each circuit of $F$, 
except 
\begin{itemize}
\item circuits $C$ of length $2$, $4$, and $6$-circuit 
such that $\partial(C) \cap \phi_2^{-1}(\cR)=\emptyset$,  and 
\item circuits $C$ of Type $P_2$ paired to a circuit $C^*$
such that $Z(\phi_2) \cap E(C) \neq \emptyset$ and 
$\partial(s(\{C, C^*\})) \cap \phi_2^{-1}(\cR)=\emptyset$. 
\end{itemize}
The \emph{second global rule} the same as the first discharging rule, but all occurrences of $\cR$ is replaced by $\cG$ and the charge added to the circuits of $F$ is $1$ instead of $2$.

We prove that the global discharging rules may not decrease the charge. 
We prove it only for the first discharging rule, the same argument holds for the second rule
(we just use Property~4$'$ instead of Property~3$'$ of $\phi_2$). 
Let $H$ be a forest in $G/F$ formed by edges $e$ in $\phi_2(e)=\cR$ (Property~3$'$ of $\phi_2$). 
We removed charge $2|E(H)|$ from these edges. 
We can bound $|E(H)|$ to be at most
the number of vertices in $H$ minus the number of isolated vertices
in $H$ minus the number of isolated edges in $H$.
This is at most the number of circuits in $F$ minus the number of 
circuits $C$ such that $\partial(C) \cap \phi^{-1}_2(\cR)=\emptyset$
minus the number of pairs $\{C, C^*\}\in {\cal P}_G(F, >_1)$
such that $\partial(s(\{C, C^*\})) \cap \phi^{-1}_2(\cR)=\emptyset$.
Finally, as $\phi$ is good (Property~5) for a pair $\{C, C^*\}\in {\cal P}_G(F, >_1)$, $|Z(\phi) \cap (C \cup C^*)|\le 1$.
Thus $|E(H)|$ to be at most the number of circuits in $F$ minus the number of 
$2$, $4$, and $6$-circuits $C$ such that $\partial(C) \cap \phi^{-1}_2(\cR)=\emptyset$
minus the number of circuits $C$ from $\C^P(>_1,F)$ paired with a circuit $C^*$ 
such that $Z(\phi_2) \cap E(C) \neq \emptyset$ and $\partial(s(\{C, C^*\})) \cap \phi^{-1}_2(\cR)=\emptyset$.
As in the first global rule we add charge $2$ to exactly this many circuits of $F$,
the first discharging rule may not decrease the charge. 
Thus also the second discharging rule may not decrease charge.

\medskip

In general, for a circuit $C$ of length $i$ 
we bound the resulting charge of $C$ as follows. 
The edges in $C$ are either used once or twice in the cover, while at most half of the edges are used twice.
Thus the charge sent to $C$ by the edges of $C$ is at most $i+\lfloor i/2 \rfloor$.
The edges outside $C$, excluding the global discharging rules, send charge $i/2$ to $C$.
The first and the second global rule add charge at most $2$ and $1$ to $C$, respectively.
Thus, in general, we can bound the resulting charge of $C$ by $1.5i+\lfloor i/2 \rfloor+3$.
However, if $C$ is of one of the following types, then we can obtain 
an improved bound on the expected value of the resulting charge of $C$:
\begin{itemize}
\item \underline{Type $2$}:
By Lemma~\ref{parity}, $\partial(C)\cap \phi^{-1}(x) = \emptyset$ for two values $x \in \{\cR,\cG,\cB\}$. 
Thus $\partial(C)\cap \phi^{-1}_1(x) = \emptyset$ for each $x \in \{\cR,\cG,\cB\}$ with probability $2/3$.
By Property~2$'$ of $\phi_2$, 
$\partial(C)\cap \phi^{-1}_2(x) = \emptyset$ for each $x \in \{\cR,\cG\}$ with probability at least $2/3$.
Thus the expected value of the resulting charge of $C$ is 
at most $4$ before applying the global discharging rules, 
at most $4+1/3 \cdot 2 = 4+2/3$ after applying the first global rule, and
at most $4+2/3+1/3\cdot 1=5$ after applying the second global rule.

\item \underline{Type $4$}:
By Lemma~\ref{parity}, $\partial(C)\cap \phi^{-1}(x) = \emptyset$ for at least one value $x \in \{\cR,\cG,\cB\}$. 
Thus $\partial(C)\cap \phi^{-1}_1(x) = \emptyset$ for each $x \in \{\cR,\cG,\cB\}$ with probability at least $1/3$.
By Property~2$'$ of $\phi_2$, 
$\partial(C)\cap \phi^{-1}_2(x) = \emptyset$ for each $x \in \{\cR,\cG\}$ with probability at least $1/3$.
Thus the expected value of the resulting charge of $C$ is 
at most $8$ before applying the global discharging rules, 
at most $8+2/3 \cdot 2 = 9+1/3$ after applying the first global rule, and
at most $9+1/3+2/3\cdot 1=10$ after applying the second global rule.

\item \underline{Type $P_2$}: Let $C^*$ be the circuit paired with $C$. 
By the definition of Type $P_2$, $Z(\phi)\cup E(C) \neq \emptyset$.
As $\phi$ is good (Property~5), 
$\partial(s(\{C, C^*\}))\cap \phi^{-1}(x) = \emptyset$ for at least one value $x \in \{\cR,\cG,\cB\}$.
Thus $\partial(s(\{C, C^*\}))\cap \phi^{-1}_1(x) = \emptyset$ for each $x \in \{\cR,\cG,\cB\}$ with probability at least 
$1/3$.
By Property~2$'$ of $\phi_2$, 
$\partial(s(\{C, C^*\}))\cap \phi^{-1}_2(x) = \emptyset$ for each $x \in \{\cR,\cG\}$ with probability at least $1/3$.
Thus the expected value of the resulting charge of $C$ is 
at most $9+1/2$ before applying the global discharging rules, 
at most $9+1/2+2/3\cdot 2= 10+5/6$ after applying the first global rule, and
at most $10+5/6+2/3\cdot 1= 11 + 1/2$ after applying the second global rule.

\item \underline{Type $6$}: 
Assume first that $\partial(C)\cap \phi^{-1}(x) = \emptyset$
for at least one value $x \in \{\cR,\cG,\cB\}$.
In this case the expected value of the resulting charge of $C$ is at most
$12+2/3\cdot 2+2/3\cdot 1=14$. 

On the other hand, assume that $\partial(C)\cap \phi^{-1}(x) \neq \emptyset$ for all $x \in \{\cR,\cG,\cB\}$.
As $\phi$ is good (Property~1) and by Lemma~\ref{parity}, 
$|\partial(C)\cap \phi^{-1}(x)| =2$ for all $x \in \{\cR,\cG,\cB\}$.
As $\phi$ is good (Property~4) 
$|E(C) \cap \phi^{-1}(x)| \equiv |E(C) \cap Z(\phi)| \ \  (\bmod \ 2)$, for all $x\in\{\cR, \cG, \cB\}$.
Thus also $|\partial(C)\cap \phi^{-1}_1(x)| =2$ for all $x \in \{\cR,\cG,\cB\}$
and $|E(C) \cap \phi^{-1}_1(x)| \equiv |E(C) \cap Z(\phi_1)| \ \  (\bmod \ 2)$, for all $x\in\{\cR, \cG, \cB\}$.

Assume that for some edge in $e\in\partial(C)$ $\phi_1(e)\neq\phi_2(e)$.
By Properties~1$'$ and~2$'$ of $\phi_2$, $|\partial(C)\cap\phi_2^{-1}(\cR)|<2$ or
$|\partial(C)\cap\phi_2^{-1}(\cG)|<2$. Thus by Lemma~\ref{parity},
$\partial(C)\cap\phi_2^{-1}(\cR)=\emptyset$ or $\partial(C)\cap\phi_2^{-1}(\cG)=\emptyset$.
If the first condition is true, then we can bound the resulting charge of $C$ by
$12+0+1=13$. If the second condition is true, then we can bound the resulting charge of $C$ by
$12+2+0=14$. 

On the other hand, assume that for all $e\in\partial(C)$ it holds that  $\phi_1(e)=\phi_2(e)$.
Then by Property~5$'$ of $\phi_2$, for all $e\in E(C)$ $\phi_1(e)=\phi_2(e)$.
And thus $|E(C) \cap \phi^{-1}_2(x)| \equiv |E(C) \cap Z(\phi_2)| \ \  (\bmod \ 2)$, for all $x\in\{\cR, \cG, \cB\}$.
But then the third cycle in the cover is guaranteed to intersect $C$ in even number of edges,
and that is at most $2$.
Thus the resulting charge of $C$ is at most $11$ before applying the global discharging rules 
and at most $14$ after applying the global discharging rules.

We conclude that the expected value of the resulting charge of a $6$-circuit $C$ is at most $14$.
\end{itemize}

This bounds the expected length of the cover according to the inequality from the lemma statement.
This implies that at least one of the randomly chosen permutations of non-zero elements of $\Z_2\times\Z_2$ produces a cover satisfying the lemma statement.
\end{proof}

\begin{lemma}\label{lemmacov3}
Let $G$ be a bridgeless cubic graph.
Let $F$ be a $2$-factor of $G$ such that $G/F$ is $5$-odd-edge-connected,
and let $>_1$ be an ordering of the circuits in $\C^I_G(F)$.
Let $s$ be an arbitrary $S$-assigning function and 
let $\phi$ be a flow on $G$ that is good with respect to $F$, $>_1$, and $s$.
Then there exists a cycle cover of $G$ of length at most
\begin{eqnarray*}
\sum_{t\in{\cal T}} c_t \cdot d_t(F, >_1, \phi),
\end{eqnarray*}
where the coefficients $c_t=(a_t+2b_t)/3$ are given in Figure~\ref{figCovers}.
\end{lemma}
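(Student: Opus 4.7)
The plan is to combine the bounds from Lemmas~\ref{lemmacov1} and~\ref{lemmacov2} by a convex averaging argument, exploiting the identity $c_t=(a_t+2b_t)/3$ from the statement. Since those two lemmas share the hypotheses of the present one, I apply each of them to the same $G$, $F$, $>_1$, $s$, and $\phi$ to produce two cycle covers of $G$: a cover $\mathcal{A}$ from Lemma~\ref{lemmacov1} of length $\ell_\mathcal{A}\le\sum_{t\in{\cal T}}a_t\,d_t(F,>_1,\phi)$, and a cover $\mathcal{B}$ from Lemma~\ref{lemmacov2} of length $\ell_\mathcal{B}\le\sum_{t\in{\cal T}}b_t\,d_t(F,>_1,\phi)$.

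The key (trivial) observation is that for any two real numbers the minimum is bounded above by every convex combination of them; in particular,
$$
\min(\ell_\mathcal{A},\ell_\mathcal{B})\le \tfrac{1}{3}\ell_\mathcal{A}+\tfrac{2}{3}\ell_\mathcal{B}.
$$
Substituting the upper bounds on $\ell_\mathcal{A}$ and $\ell_\mathcal{B}$ gives
$$
\min(\ell_\mathcal{A},\ell_\mathcal{B})\le \sum_{t\in{\cal T}}\frac{a_t+2b_t}{3}\,d_t(F,>_1,\phi)=\sum_{t\in{\cal T}}c_t\,d_t(F,>_1,\phi).
$$
Choosing whichever of $\mathcal{A}$, $\mathcal{B}$ is the shorter yields the desired cycle cover.

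There is no genuine obstacle here: this lemma is a purely mechanical averaging of the two previous bounds, presumably stated separately because the weighted coefficients $c_t$ are what one needs later, when the bound $c_t/n_t$ from Figure~\ref{figCovers} is combined with the discharging over the circuits of $F$. The only point worth checking is that the hypotheses of Lemmas~\ref{lemmacov1} and~\ref{lemmacov2} agree verbatim with those of the present statement, which they do, so both can be invoked on the same data with no extra work.
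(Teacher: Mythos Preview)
Your proof is correct and follows exactly the paper's own argument: take the covers from Lemmas~\ref{lemmacov1} and~\ref{lemmacov2}, bound the minimum of their lengths by the convex combination $\tfrac{1}{3}\ell_{\mathcal A}+\tfrac{2}{3}\ell_{\mathcal B}$, and conclude using $c_t=(a_t+2b_t)/3$. There is nothing to add.
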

\begin{proof}
Let ${\cal C}_1$ be the cover whose existence is guaranteed by Lemma~\ref{lemmacov1} and
let ${\cal C}_2$ be the cover whose existence is guaranteed by Lemma~\ref{lemmacov2}.
Let $l({\cal C}_1)$ and $l({\cal C}_2)$ denote the lengths of the covers. 
\begin{eqnarray*}
\min\{l({\cal C}_1),l({\cal C}_2)\}
&\le& 1/3 \cdot l({\cal C}_1) + 2/3 \cdot l({\cal C}_1) \\
&\le& 1/3 \cdot \sum_{t\in{\cal T}} a_t \cdot d_t(F, >_1, \phi) +
2/3 \cdot \sum_{t\in{\cal T}} b_t \cdot d_t(F, >_1, \phi) \\
&=& \sum_{t\in{\cal T}} c_t \cdot d_t(F, >_1, \phi).
\end{eqnarray*}
Thus the shorter of these two covers satisfies the lemma statement. 
\end{proof}

Finally, we bound the length of the cover from Lemma~\ref{lemmacov3} using only $|E(G)|$ and $|\C^N_G(F)|$.

\begin{lemma}\label{lemmamain}
Let $G$ be a bridgeless cubic graph. Let $F$ be a $2$-factor of $G$ such that $G/F$ is $5$-odd-edge-connected.
Then $G$ has a cycle cover of total length at most 
$14/9\cdot|E(G)|+2/3 \cdot |\C^N_G(F)|$.
\end{lemma}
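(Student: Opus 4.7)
The plan is to combine the cover-length bound of Lemma~\ref{lemmacov3} with the numerical data in Figure~\ref{figCovers} and the fact that $G$ is cubic. First, fix an arbitrary complete strict ordering $>_1$ on $\C^I_G(F)$ and an arbitrary $S$-assigning function $s$. Lemma~\ref{lflow} then guarantees the existence of a flow $\phi$ that is good with respect to $F$, $>_1$, and $s$, and Lemma~\ref{lemmacov3} in turn produces a cycle cover of $G$ of length at most
\[
L \;=\; \sum_{t\in\cal T} c_t \cdot d_t(F, >_1, \phi),
\]
where the coefficients $c_t$ and the associated circuit sizes $n_t$ are read off from Figure~\ref{figCovers}.

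The key arithmetical observation is that the last column of Figure~\ref{figCovers} shows $c_t/n_t \le 7/3$ for every type $t\in{\cal T}\setminus\{N\}$, while Type $N$ is the sole exception: $c_N = 37/3$ and $n_N = 5$, so $c_N = (7/3)\,n_N + 2/3$. Consequently $c_t \le (7/3)\,n_t + (2/3)\cdot[t=N]$ uniformly in $t$, and substituting into the bound on $L$ yields
\[
L \;\le\; \frac{7}{3}\sum_{t\in\cal T} n_t \cdot d_t(F,>_1,\phi) \;+\; \frac{2}{3}\cdot d_N(F,>_1,\phi).
\]

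To close, I use that $F$ is a $2$-factor, so $\sum_t n_t \cdot d_t(F,>_1,\phi) = |V(G)|$, and that $G$ is cubic, so $|V(G)| = \tfrac{2}{3}|E(G)|$. Since Type $N$ circuits are exactly the circuits of $F$ that lie in $\C^N_G$, we have $d_N(F,>_1,\phi) = |\C^N_G(F)|$, and combining everything gives $L \le \tfrac{14}{9}|E(G)| + \tfrac{2}{3}|\C^N_G(F)|$, as required.

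There is no real obstacle here: the heavy lifting has already been done in Lemmas~\ref{lflow} and~\ref{lemmacov3} and in the construction of the table of coefficients. The only thing that must be verified is the uniform inequality $c_t/n_t \le 7/3$ for all $t\ne N$, which is an entry-by-entry check against Figure~\ref{figCovers} (trivial for the listed types; for Type $i$ with $i>12$ one verifies $c_i/n_i = (5i + 2\lfloor i/4\rfloor + 2\lfloor i/2\rfloor + 6)/(3i) \le 7/3$, i.e.\ $2\lfloor i/4\rfloor + 2\lfloor i/2\rfloor + 6 \le 2i$, which is immediate for $i\ge 13$).
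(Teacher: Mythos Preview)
Your proof is correct and follows essentially the same approach as the paper: fix $>_1$ and $s$ arbitrarily, invoke Lemma~\ref{lflow} to obtain a good flow, apply Lemma~\ref{lemmacov3}, and then use the entry-by-entry inequality $c_t/n_t\le 7/3$ (with the single overshoot $2/3$ at Type~$N$) together with $\sum_t n_t d_t=|V(G)|=\tfrac{2}{3}|E(G)|$. The only addition over the paper is your explicit verification of the inequality for Type~$i$ with $i>12$, which the paper simply records in the table.
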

\begin{proof}
Let $>_1$ be an arbitrary ordering of the circuits in $\C^I(F)$. Let $s$ be an arbitrary $S$-assigning function.
By Lemma~\ref{lflow}, there exists a flow that is good with respect to $F$, $>_1$, and $s$.
Let $\C$ be the cover that satisfies Lemma~\ref{lemmacov3} 
and let $l(\C)$ denote the length of $\C$. As each circuit of $F$ has its type
$$
|V(G)|= \sum_{t\in{\cal T}} n_t \cdot d_t(F, >_1, \phi)
$$
Thus (see Figure~\ref{figCovers} for the coefficients $c_t/n_t$)
\begin{eqnarray*}
l(\C) 
&\le& \sum_{t\in{\cal T}} c_t \cdot d_t(F, >_1, \phi) \\
&=& \sum_{t\in{\cal T}} (c_t/n_t) \cdot d_t(F, >_1, \phi)\cdot n_t \\
&\le& 2/15 \cdot d_N(F, >_1, \phi)\cdot n_N+\sum_{t\in{\cal T}} 7/3 \cdot d_t(F, >_1, \phi)\cdot n_t \\
&\le& 2/3 \cdot |\C^N(F)|+7/3 \sum_{t\in{\cal T}}  \cdot d_t(F, >_1, \phi)\cdot n_t \\
&=& 7/3 \cdot |V(G)|+2/3 \cdot |\C^N(F)|=14/9.|E(G)|+2/3 \cdot |\C^N(F)|.
\end{eqnarray*}

\end{proof}

\section{Bounding $|\C^N_G(F)|$}

To prove Theorems~\ref{thm:main} and \ref{thm:main2}, we show how to pick a $2$-factor of $G$ that does not contain many circuits from $\C^N_G$. The approach is a variation on Proposition~5 from \cite{oddness} and Lemma~2.8 from \cite{sccc}.

Let $G$ be a bridgeless cubic graph.
Let $m=|E(G)|$ and let $E(G)=\{e_1, e_2, \dots, e_m\}$. Each perfect matching $M$ of $G$ can be represented by its \emph{characteristic vector} ${\bf x}^M \in \mathbb{R}^m$ in which the $i$-th coordinate ${\bf x}^M(i)$ is $1$  if $e_i\in M$ and $0$ otherwise. The \emph{perfect matching polytope} of $G$ is the convex hull of the characteristic vectors of all perfect matchings in $G$. 
Edmonds~\cite{edmonds} characterized the perfect matching polytope of a graph $G$ with an even number of vertices.
A vector ${\bf x}$ belongs to the perfect matching polytope of $G$  if and only if ${\bf x}$ satisfies the following conditions.
\begin{enumerate}
\item ${\bf x}(i)  \ge 0$ for all $i\in\{1, \dots m\}$.
\item $\sum_{e_i\in\partial_G(v)} {\bf x}(i) = 1$, for all $v\in V(G)$.
\item $\sum_{e_i\in\partial_G(A)} {\bf x}(i) \ge 1$, for $A \subseteq V(H)$ with $|A|$ odd.
\end{enumerate}

The following lemma allows us to bound the number of $5$-circuits from $\C^N$ that end up in a conveniently chosen $2$-factor.
\begin{lemma}\label{pmp}
Let $G$ be a cubic bridgeless graph, let $m=|E(G)|$ and let $E(G)=\{e_1, \dots, e_m\}$. 
Let ${\bf y}$ be a point in the perfect matching polytope of $G$
such that for each set $A \subseteq V(G)$ with $|\partial_G(A)|=3$, we have
$\sum_{e_i\in\partial(A)} {\bf y}(i) = 1$. Let $\C$ be a subset of the set of all $5$-circuits in $G$.
Then $G$ has a $2$-factor such that 
\begin{itemize}
\item $G/F$ is $5$-odd-edge-connected and
\item the number of circuits from $\C$ that are in $F$ is at most 
$$
\sum_{C\in\C} \frac{\left(\sum_{e_i\in\partial_G(C)} {\bf y}(i)\right) -1}{4}.
$$
\end{itemize}
\end{lemma}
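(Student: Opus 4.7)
The plan is to sample a perfect matching at random from the distribution encoded by ${\bf y}$ and analyse the $2$-factor $F=G-M$. By Edmonds' characterisation, ${\bf y}$ can be written as a convex combination ${\bf y}=\sum_j\alpha_j{\bf x}^{M_j}$, and I would view this as a probability distribution assigning probability $\alpha_j$ to $M_j$, so that $\Pr[e_i\in M]={\bf y}(i)$ for every $i$ and $F_M=G-M$ is automatically a $2$-factor.

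First I would verify that $G/F$ is $5$-odd-edge-connected for every $M$ in the support. Fix $A\subseteq V(G)$ with $|\partial_G(A)|=3$. In a cubic graph, $|M\cap\partial_G(A)|\equiv |A|\pmod{2}$ for any perfect matching $M$; parity of $|\partial_G(A)|$ forces $|A|$ odd, so $|M\cap\partial_G(A)|\in\{1,3\}$. The hypothesis gives $\E|M\cap\partial_G(A)|=\sum_{e_i\in\partial_G(A)}{\bf y}(i)=1$, and together with $|M\cap\partial_G(A)|\geq 1$ this forces $|M\cap\partial_G(A)|=1$ for every $M$ in the support. Consequently $F$ has exactly two edges in every $3$-cut of $G$, so no $3$-cut is a union of $F$-components. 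Combined with bridgelessness of $G$, this is exactly the condition that $G/F$ is $5$-odd-edge-connected.

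Next I would bound the expected number of circuits of $\C$ occurring as components of $F$. For a $5$-circuit $C$ set $X_C(M)=\sum_{e_i\in\partial_G(C)}{\bf x}^M(i)$, where the sum is over the multiset $\partial_G(C)$. Summing the matching condition $\sum_{e\in\partial_G(v)}{\bf x}^M(e)=1$ over $v\in V(C)$ and splitting the contribution by whether the matching edge lies in $E(C)$, in a chord of $C$, or is a true outside edge yields the identity $X_C(M)=5-2|E(C)\cap M|$; the multiplicity-two contribution of chords in $\partial_G(C)$ is exactly what is needed to make this clean. Since $|E(C)\cap M|\in\{0,1,2\}$, we have $X_C\in\{1,3,5\}$, and $X_C=5$ precisely when $C$ is a component of $F$. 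Writing $p_k=\Pr[X_C=k]$ and $s_C=\sum_{e_i\in\partial_G(C)}{\bf y}(i)$,
\begin{equation*}
s_C=\E X_C=5p_5+3p_3+p_1=1+4p_5+2p_3\geq 1+4p_5,
\end{equation*}
so $\Pr[C\text{ is a component of }F]\leq (s_C-1)/4$.

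Summing over $C\in\C$ by linearity of expectation bounds the expected number of circuits of $\C$ that are components of $F$ by $\sum_{C\in\C}(s_C-1)/4$. Since every $M$ in the support already satisfies the $5$-odd-edge-connectivity requirement, an averaging argument picks a specific $M$ achieving at most this expectation while giving a $2$-factor with $G/F$ $5$-odd-edge-connected, which is the $F$ required by the lemma. The main technical step I expect is the multiset identity $X_C=5-2|E(C)\cap M|$, and in particular correctly accounting for chords of $C$; once this is in hand, the three-valued distribution of $X_C$ yields the constant $4$ in the denominator immediately.
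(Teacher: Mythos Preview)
Your proof is correct and follows essentially the same approach as the paper: write ${\bf y}$ as a convex combination of perfect-matching indicators, observe that the tightness condition on $3$-cuts forces every matching in the support to meet each $3$-cut in exactly one edge (giving $5$-odd-edge-connectivity of $G/F$), and then average to bound the number of $5$-circuits from $\C$ landing in $F$. The only cosmetic difference is that the paper first picks an $M_j$ minimising the aggregate $f({\bf x}^{M_j})=\sum_{C}X_C(M_j)$ and then deduces $f({\bf y})\ge |\C|+4c$, whereas you bound each $\Pr[X_C=5]$ individually and sum; both routes give the identical inequality, and your explicit derivation of $X_C=5-2|E(C)\cap M|$ (including the chord/multiset bookkeeping) is in fact more detailed than the paper's one-line parity remark.
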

\begin{proof}
As ${\bf y}$ is in the perfect matching polytope,
there exists an integer $k$, a set of $k$ perfect matchings $\{M_1, \dots, M_k\}$ and 
a set of $k$ positive real numbers $\{\alpha_1, \dots, \alpha_k\}$ that sum to $1$, 
such that
\begin{eqnarray}
{\bf y} = \sum_{i=1}^k \alpha_i {\bf x}^{M_i}. \label{eq1}
\end{eqnarray}

Let $A \subseteq V(G)$ such that $\partial_G(A)=\{e_a, e_b, e_c\}$, where $1\le a<b<c\le m$.
As a $3$-edge-cut in a cubic graph separates odd number of vertices, for each perfect matching $M$, the sum 
${\bf x}^M(a)+{\bf x}^M(b)+{\bf x}^M(c)$  is either $1$ or~$3$.
Note that if we sum coordinates $a$, $b$ and $c$ in (\ref{eq1})
the left side is equal to $1$ due to the lemma assumptions
while the right side is at least one.
As the coefficients $\alpha_i$ are positive, the equality is attained if and only if ${\bf x}^{M_i}(a)+{\bf x}^{M_i}(b)+{\bf x}^{M_i}(c)=1$ 
for each  $i\in \{1, \dots, k\}$.
Thus for each $i\in \{1, \dots, k\}$, $M_i$ contains exactly one edge of each $3$-edge-cut,
and the complementary $2$-factor $F_i$ crosses all $3$-edge-cuts of $G$. As $G$ is bridgeless, this implies that $G/F_i$ is $5$-odd-edge-connected.

\smallskip

We define
$$
f({\bf z})= \sum_{C\in \C} \sum_{e_i\in \partial(C)} {{\bf z}(i)}
$$
Due to linearity of $f$ and by (\ref{eq1}), we have
$$
f({\bf y})  = \sum_{i=1}^k \alpha_i f({\bf x}^{M_i}).
$$
Assume for contradiction that $f({\bf x}^{M_i})>f({\bf y})$ for all $i\in\{1, \dots, k\}$. But then 
we get $f({\bf y})  > \sum_{i=1}^k \alpha_i f({\bf x})$ which, as the coefficients $\alpha_i$ sum to $1$, 
is a contradiction. Thus, for some $j\in \{1, \dots, k\}$, we have $f({\bf x}^{M_j}) \le f({\bf y})$.

The circuits of $\C$ have odd number of vertices. 
Thus for each $C \in \C$ the $\sum_{e_i\in \partial(C)} {\bf x}^{M_j}(i)$ equals either $1$, $3$ or $5$.
Let $F_j$ be the $2$-factor complementary to $M_j$. 
If $C \in \C$ is in $F_j$,
then  $\sum_{e_i\in \partial(C)} {\bf x}^{M_j}(i)=5$.
Let $c$ be the number of circuits from $\C$ that are in $F_j$. Then
$$
\sum_{C\in \C} \sum_{e_i\in \partial(C)} {{\bf y}(i)} 
= f({\bf y}) \ge f({\bf x}^{M_j})=\sum_{C\in \C} \sum_{e_i\in \partial(C)} x^{M_j}(i) \ge \left(|\C|-c\right) \cdot 1+ c \cdot 5.
$$
From this inequality we have that $c\le \sum_{C\in\C} [(\sum_{e_i\in\partial_G(C)} {\bf y}(i)) -1]/4$ and $F_j$ satisfies both conditions of the lemma.
\end{proof}

Finally, we are ready to prove the main results of this paper.
\begin{proof}[Proof of Theorem~\ref{thm:main}]
As the vector $(1/3, 1/3, \dots, 1/3)$ is in the perfect matching polytope of $G$, using Lemma~\ref{pmp} with
$\C=\C^N_G$, there is a $2$-factor $F$ of $G$ such that $G/F$ is $5$-odd-edge-connected
and $|\C^N_G(F)|\le 1/6 \cdot |\C^N_G|$.

By Lemma~\ref{lemmamain}, $G$ has a cycle cover of length at most 
\begin{eqnarray*}
14/9\cdot |E(G)|+2/3 \cdot |\C^N_G(F)| &\le& 14/9\cdot |E(G)|+1/9 \cdot |\C^N_G| \\
&\le&
14/9\cdot |E(G)|+1/45 \cdot |V(G)| 
= 212/135 \cdot |E(G)|.
\end{eqnarray*}
\end{proof}

\begin{proof}[Proof of Theorem~\ref{thm:main2}]
Let $m=|E(G)|$ and let $E(G)=\{e_1, \dots, e_m\}$. We construct a vector ${\bf z}$ that is in the perfect matching
polytope of $F$ using the following procedure with variable ${\bf y}$ that is a vector of length $m$. 
Note that as $G$ is cyclically $4$-edge-connected, to show that ${\bf y}$ satisfies Equalities~1~and~3 from the characterisation of the perfect matching polytope, it is sufficient to show that all coordinates of {\bf y} are at least $1/5$.
\begin{enumerate}
\item ${\bf y}:=(1/3, \dots, 1/3)$. The vector ${\bf y}$ belongs to the perfect matching polytope of $G$.
\item For each edge $e_i$ that does not belong to any circuit from $\C^N_G$ but both its endvertices
belong to circuits from $\C^N_G$ we do the following subprocedure. Note that the endvertices of $e_i$
belong to different circuits from $\C^N_G$ because $G$ is cyclically $4$-edge-connected.
After each subprocedure ${\bf y}$ belongs to the perfect matching polytope of $G$.
\begin{enumerate}
\item ${\bf y}(i):=1/5$. This is the only step where we modify the value of ${\bf y}(i)$.
Therefore, this step decreases the value of ${\bf y}(i)$ by $2/15$. 
\item Let $v$ be one endvertex of $e_i$. Let $C\in\C^N$ be the circuit that contains $v$.
Let us denote the edges of $C$ by $e_a$, $e_b$, $e_c$, $e_d$, $e_f$ in the consecutive order so that
$v$ is incident with $e_a$ and $e_f$. We set 
${\bf y}(j):={\bf y}(j)+1/15$, for $j\in \{a, c, f\}$, and
${\bf y}(j):={\bf y}(j)-1/15$, for $j\in \{b, d\}$. 
We repeat the same operation for the second endvertex of $e_i$. Now 
Equality 2~from the characterisation of the perfect matching polytope of $G$ is satisfied by ${\bf y}$. 

For any edge $e_k$ that is in a circuit from $\C^N$ there are only two choices for edge $e_i$ such that
${\bf y}(k)$ is decreased. Thus ${\bf y}(k)$ does not decrease below $1/5$. Thus also Equalities~1~and~3
are satisfied by ${\bf y}$ after this step.
\end{enumerate}
\end{enumerate}
We name the vector ${\bf y}$ created by this procedure as ${\bf z}$. The vector ${\bf z}$ is from the perfect matching polytope of $G$ and has the following property

\medskip

\noindent \emph{Property}: For each edge $e_i$ that does not belong to any circuit from $\C^N_G$:
\begin{itemize}
\item if both endvertices of $e_i$ belong to circuits from $\C^N_G$, then ${\bf z}(i)=1/5$.
\item otherwise, ${\bf z}(i)=1/3$.
\end{itemize}

\medskip

Let $a$ denote the number of edges of $G$ that are not in a circuit from $\C^N_G$
but both its endvertices belong to circuits from $\C^N_G$. 
These edges are present in $\partial(C)$ for two circuits $C\in \C^N$ and contribute $2/15$ less 
to $\sum_{e_i \in \partial_G(C) {\bf y}(i)}$ than edges with ${\bf y}(i)=1/3$.
As the vector ${\bf z}$ is in the perfect matching polytope, using Lemma~\ref{pmp} with
$\C=\C^N_G$, there is a $2$-factor $F$ of $G$ such that $G/F$ is $5$-odd-edge-connected
and 
$$
|\C^N_G(F)| \le \sum_{C\in \C^N_G} \frac{(\sum_{e_i \in \partial_G(C)} {\bf y}(i))-1}{4} \le 1/6 \cdot |\C^N_G| - 2 \cdot \frac{2/15}{4} \cdot a = 1/6 \cdot |\C^N| - \frac{1}{15} \cdot a.
$$
By Lemma~\ref{lemmamain}, $G$ has a cycle cover of length at most 
\begin{eqnarray*}
14/9\cdot |E(G)|+2/3 \cdot |\C^N_G(F)| &\le& 14/9\cdot |E(G)|+1/9 \cdot |\C^N| - 2/45 \cdot a.
\end{eqnarray*}

Except for $a$ edges the each edge from $\partial(C)$, where $C\in\C^N$,
has its second end in a vertex that is not in a circuit from $\C^N$. 
Thus $5\cdot |C^N_G|- 2a \le 3 \cdot (|V(G)|-5\cdot|\C^N_G|)$ and
$|C^N_G|\le(3\cdot |V(G)| + 2a)/20$.  The cycle cover has length at most
\begin{eqnarray*}
14/9\cdot |E(G)|+1/60 \cdot |V(G)| - 1/30 \cdot a \le 47/30 \cdot |E(G)|.
\end{eqnarray*}

\end{proof}

\section*{Acknowledgement}
The author was supported by the grants APVV-15-0220 and VEGA 1/0813/18.


\end{document}